\newcommand\diag{\operatorname{diag}}
\newcommand\supp{\operatorname{supp}}
\theoremstyle{plain}
  \newtheorem{theorem}{Theorem}
  \newtheorem{proposition}[theorem]{Proposition}
  \newtheorem{lemma}[theorem]{Lemma}
  \newtheorem{corollary}[theorem]{Corollary}
\theoremstyle{definition}
\date{5th June 2013}
\begin{document}

\title[Control of singular multipliers by maximal operators]{Optimal control of singular Fourier multipliers by maximal operators}
\author{Jonathan Bennett}
\address{School of Mathematics \\
The University of Birmingham \\
The Watson Building \\
Edgbaston \\
Birmingham \\
B15 2TT \\
United Kingdom}
\email{J.Bennett@bham.ac.uk}
\thanks{Supported by ERC Starting Grant
307617.}

\subjclass[2000]{44B20; 42B25}
\keywords{Fourier multipliers; Maximal operators; Weighted inequalities}
\begin{abstract}
We control a broad class of singular (or ``rough") Fourier multipliers by geometrically-defined maximal operators via general weighted $L^2(\mathbb{R})$ norm inequalities. The multipliers involved are related to those of Coifman--Rubio de Francia--Semmes, satisfying certain weak Marcinkiewicz-type conditions that permit highly oscillatory factors of the form $e^{i|\xi|^\alpha}$ for both $\alpha$ positive and negative. The maximal functions that arise are of some independent interest, involving fractional averages associated with tangential approach regions (related to those of Nagel and Stein), and more novel ``improper fractional averages" associated with ``escape" regions. Some applications are given to the theory of $L^p-L^q$ multipliers, oscillatory integrals and dispersive PDE, along with natural extensions to higher dimensions.
\end{abstract}

\maketitle
\begin{center}
\textit{Dedicated to the memory of Adela Moyua, 1956--2013.}
\end{center}

\section{Introduction and statements of results}
Given a Fourier multiplier $m$, with corresponding convolution operator $T_m$, there has been
considerable interest in identifying, where possible, ``geometrically-defined" maximal operators
$\mathcal{M}$ for which a weighted $L^2$-norm inequality of the form
\begin{equation}\label{want}
\int_{\mathbb{R}^n}|T_mf|^2w\leq\int_{\mathbb{R}^n}|f|^2\mathcal{M}w
\end{equation}
holds for all admissible input functions $f$ and weight functions $w$.
This very general Fourier multiplier problem was made particularly explicit
in the 1970s in work of A. C\'ordoba and C. Fefferman \cite{CF}, following the emergence of fundamental connections between the theory of Fourier multipliers and elementary geometric notions such as curvature (see in particular \cite{Feff}, \cite{CordKak}, \cite{S}).
Such control of a multiplier $m$ by a maximal operator $\mathcal{M}$, combined with an elementary duality argument, reveals that for $p,q\geq 2$,
\begin{equation}\label{duality}
\|m\|_{p,q}:=\|T_m\|_{L^p-L^q}\leq \|\mathcal{M}\|_{L^{(q/2)'}-L^{(p/2)'}}^{1/2}.
\end{equation}
Thus it is of particular interest to identify an ``optimal" maximal operator $\mathcal{M}$ for which \eqref{want} holds, in the sense that \eqref{duality} permits optimal $L^p-L^q$ bounds for $\mathcal{M}$ to be transferred to optimal bounds for $T_m$.

There are a variety of results of this nature, although often formulated in terms of the convolution kernel rather than the multiplier.
For example, if $T$ denotes a Calder\'on--Zygmund singular integral operator on $\mathbb{R}^n$, such as the Hilbert transform on the line, C\'ordoba and Fefferman \cite{CF} (see also \cite{HMW}) showed that for each $s>1$ there is a constant $C_s<\infty$ for which
\begin{equation}\label{hilbertprim}
\int_{\mathbb{R}}|Tf|^2w\leq C_s\int_{\mathbb{R}}|f|^2(Mw^s)^{1/s},
\end{equation}
holds, where $M$ denotes the classical Hardy--Littlewood maximal operator. This result extends to weighted $L^p$ estimates for $1<p<\infty$; see \cite{CF}. The inequality \eqref{hilbertprim} may be viewed as a consequence of the classical theory of Muckenhoupt $A_p$ weights through the fundamental fact that if $(Mw^s)^{1/s}<\infty$ a.e. and $s>1$ then $(Mw^s)^{1/s}\in A_1\subset A_2$; see \cite{S1} and the references there. Of course, for any fixed $s>1$ the maximal operator $w\mapsto (Mw^s)^{1/s}$ in \eqref{hilbert} is not optimal since it fails to be $L^p$-bounded in the range $1<p\leq s$, while $T$ is bounded on $L^p$ for all $1<p<\infty$. More recently this was remedied by Wilson \cite{W}, who showed that\footnote{Throughout this paper we shall write $A\lesssim B$ if there exists a constant $c$ such that $A\leq cB$. In particular, this constant will always be independent of the input function $f$ and weight function $w$. The relations $A\gtrsim B$ and $A\sim B$ are defined similarly.}
\begin{equation}\label{hilbert}
\int_{\mathbb{R}}|Tf|^2w\lesssim\int_{\mathbb{R}}|f|^2M^3w,
\end{equation}
where $M^3=M\circ M\circ M$ denotes the 3-fold composition of $M$ with itself. As with \eqref{hilbertprim}, this useful result extends to weighted $L^p$ norms for $1<p<\infty$; see \cite{W}, \cite{P1}, \cite{RT}. There are numerous further results belonging to the considerable theory surrounding the $A_p$ weights; see for example \cite{GCRdF}, \cite{P2}, \cite{Hyt}, \cite{LSU}, \cite{HLP}, \cite{Lerner}.

In the setting of \emph{oscillatory integrals} the controlling maximal operators appear to acquire a much more interesting geometric nature, well beyond the scope of the classical $A_p$ theory.
This is illustrated well by a compelling and seemingly very deep conjecture concerning the classical \emph{Bochner--Riesz multipliers}
$$
m_\delta(\xi)=\max\{(1-|\xi|^2)^\delta, 0\},
$$
where $\xi\in\mathbb{R}^d$ and $\delta\geq 0$. Of course, $m_0$ is simply the characteristic function of the unit ball in $\mathbb{R}^d$, allowing us to interpret $m_\delta$, for $\delta>0$, as a certain regularisation of this characteristic function. The classical Bochner--Riesz conjecture concerns the range of exponents $p$ for which $m_\delta$ is an $L^p$-multiplier.
In the 1970s A. C\'ordoba \cite{CordKak} and E. M. Stein \cite{S} raised the possibility that a weighted inequality of the form
\eqref{want} holds where $\mathcal{M}$ is some suitable variant of the Nikodym maximal operator
$$
\mathcal{N}_\delta w(x):=\sup_{T\ni x}\frac{1}{|T|}\int_T w;
$$
see also \cite{Feff}, \cite{Feff1}.
Here the supremum is taken over all cylindrical tubes of eccentricity less that $1/\delta$, containing the point $x$. This maximal operator $\mathcal{M}$ should be geometrically-defined (very much like $\mathcal{N}_\delta$) and its known/conjectured bounds should be similar to those of $\mathcal{N}_\delta$, thus essentially implying the full Bochner--Riesz conjecture via \eqref{duality}. \footnote{Similar weighted inequalities relating the Fourier restriction and Kakeya conjectures have also received some attention in the literature; see \cite{BCSV} for further discussion.} Such a result is rather straightforward for $d=1$ as it reduces to the aforementioned inequality for the Hilbert transform.
In higher dimensions this question is far from having a satisfactory answer already for $d=2$ (see \cite{Bo}, \cite{Christ}, \cite{CRS}, \cite{CS1}, \cite{CS2}, \cite{CSe}, \cite{BCSV}, \cite{DMOS}, \cite{LRS}, \cite{CR} for some related results). The associated convolution kernel
$$
K_\delta(x):=\mathcal{F}^{-1}m_\delta(x)=\frac{cJ_{d/2+\delta}(2\pi|x|)}{|x|^{\frac{d}{2}+\delta}}=c\frac{e^{2\pi i|\xi|}+e^{-2\pi i|\xi|}+o(1)}{|\xi|^{\frac{d+1}{2}+\delta}},$$
unlike the Hilbert kernel,
is (for $\delta$ sufficiently small) very far from being Lebesgue integrable. Here $J_\lambda$ denotes the Bessel function of order $\lambda$, making $K_\delta$ highly oscillatory.


In \cite{BH}, using arguments from \cite{BCSV} in the setting of Fourier extension operators, Harrison and the author gave nontrivial examples of such ``optimal" control of oscillatory kernels on the line by geometrically-defined maximal operators.
In particular, for integers $\ell\geq 3$, they showed that
\begin{equation}\label{SamJon}
\int_{\mathbb{R}}|e^{i(\cdot)^\ell}*f|^2w\lesssim\int_{\mathbb{R}}|f|^2M^4\mathcal{M}M^2w,
\end{equation}
where
$$\mathcal{M}w(x):=\sup_{(y,r)\in \Gamma(x)}\frac{1}{r^{\frac{1}{\ell-1}}}\int_{y-r}^{y+r}w,
$$
and
\begin{equation}\label{SamJonApp}\Gamma(x)=\{(y,r):0<r\leq 1;\;\;|x-y|\leq r^{-\frac{1}{\ell-1}}\}.\end{equation}
The maximal operator $\mathcal{M}$ here may be interpreted as a fractional Hardy--Littlewood maximal operator associated with an approach region
$\Gamma(x)$. This maximal operator is closely related to those studied by Nagel and Stein in \cite{NS}, although here tangential approach to infinite order is permitted. It is shown in \cite{BH} that $\mathcal{M}$ has a sharp bound on $L^{(\ell/2)'}$, which may be reconciled via \eqref{SamJon} with a sharp $L^\ell$ bound for convolution with $e^{ix^\ell}$. We note in passing that the factors of Hardy--Littlewood maximal operator appearing in \eqref{SamJon} are of secondary importance as $\mathcal{M}$ and $M^4\mathcal{M}M^2$ share the same $L^p-L^q$ mapping properties. This follows from the $L^p$-boundedness of $M$ for $1<p\leq\infty$.

In this paper we seek an understanding of the ``map" $m\mapsto\mathcal{M}$, from Fourier multiplier to optimal controlling maximal operator, for which \eqref{want} holds.
As we shall see, an inequality of the form \eqref{want} does indeed hold for a wide class of multipliers $m$ and a surprisingly rich family of geometrically-defined maximal operators $\mathcal{M}$. This class of multipliers is sufficiently singular to apply to a variety of highly oscillatory convolution kernels, placing \eqref{SamJon} in a much broader context.
The maximal operators turn out to be fractional Hardy--Littlewood maximal operators associated with a diverse family of approach and ``escape" regions in the half-space. While such operators corresponding to approach regions have arisen before (see \cite{NS}, \cite{BCSV}, \cite{BH}), those associated with ``escape" regions appear to be quite novel, involving improper-fractional averages.

As is well known, in one dimension at least, the \emph{variation} of a multiplier can play a decisive role in determining its behaviour as an operator.
For example, if a multiplier $m$ is of bounded variation on the line, then it often satisfies the same norm inequalities as the Hilbert transform. This is a straightforward consequence of the elementary identity
\begin{equation}\label{marcrep}
T_m=\lim_{t\rightarrow -\infty}m(t)I+\frac{1}{2}\int_{\mathbb{R}}(I+iM_{-t}HM_t)dm(t).
\end{equation}
Here $I$ denotes the identity operator on $\mathbb{R}^d$, the modulation operator $M_t$ is given by $M_tf(x)=e^{-2\pi ixt}f(x)$, and $dm(t)$ denotes the Lebesgue--Stieltjes measure (which we identify with $|m'(t)|dt$ throughout). In particular, combining this with \eqref{hilbert} quickly leads to the inequality
\begin{equation}\label{basicBV}
\int_{\mathbb{R}}|T_mf|^2w\lesssim \int_{\mathbb{R}}|f|^2M^3w.
\end{equation}
Invoking classical weighted Littlewood--Paley theory for dyadic decompositions of the line (see \cite{W2} and \cite{BH} for further discussion) leads to the following weighted version of the Marcinkiewicz multiplier theorem (c.f. Kurtz \cite{Kurtz}).
\begin{theorem}\label{weightedmarcperez}
If $m:\mathbb{R}\rightarrow\mathbb{C}$ is a bounded function which is uniformly of bounded variation on dyadic intervals, that is
\begin{equation}\label{Marc}
\sup_{R>0}\int_{R\leq |\xi|\leq 2R}|m'(\xi)|d\xi<\infty,
\end{equation}
then
$$
\int_{\mathbb{R}}|T_mf|^2w\lesssim \int_{\mathbb{R}}|f|^2M^7w.
$$
\end{theorem}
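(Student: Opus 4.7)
The plan is to prove this by dyadically localizing $m$ in frequency, applying \eqref{basicBV} on each piece, and reassembling via weighted Littlewood--Paley theory on the line, in the spirit of Kurtz \cite{Kurtz}.

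First, fix a smooth bump $\psi$ supported in $\{\tfrac12\leq|\xi|\leq 2\}$ with $\sum_{k\in\mathbb{Z}}\psi(\xi/2^k)\equiv 1$ for $\xi\neq 0$, and set $\psi_k(\xi)=\psi(\xi/2^k)$, $m_k=m\psi_k$. The hypothesis \eqref{Marc}, the boundedness of $m$, and the Leibniz rule give
$$\sup_{k\in\mathbb{Z}}\left(\|m_k\|_\infty+\int_{\mathbb{R}}|m_k'(\xi)|\,d\xi\right)<\infty,$$
so each $m_k$ is of total bounded variation uniformly in $k$. Hence \eqref{basicBV} applies to each piece: for every $g$ and every weight $v$,
$$\int|T_{m_k}g|^2\,v\;\lesssim\;\int|g|^2\,M^3v.$$

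Next, let $\tilde P_k$ be a smooth Littlewood--Paley projection whose symbol $\tilde\psi_k$ is supported in a slightly enlarged dyadic annulus (still of bounded overlap) and equals $1$ on $\supp\psi_k$, so that $T_{m_k}\tilde P_k=T_{m_k}$ and
$$T_mf=\sum_k T_{m_k}\tilde P_kf.$$
The classical one-dimensional weighted Littlewood--Paley theory (see \cite{W2}, \cite{BH}) supplies two ingredients: a weighted almost-orthogonality estimate
$$\int\Big|\sum_k h_k\Big|^2 w\;\lesssim\;\int\sum_k|h_k|^2\,M^{2}w$$
valid for $h_k$ with Fourier supports in boundedly-overlapping dyadic annuli, and the weighted square function bound
$$\int\sum_k|\tilde P_kf|^2\,u\;\lesssim\;\int|f|^2\,M^{2}u.$$
Chaining the almost-orthogonality estimate (applied to $h_k=T_{m_k}\tilde P_k f$), the per-piece inequality above (applied with weight $M^2w$), and the square function estimate (applied with $u=M^5 w$) yields
$$\int|T_m f|^2 w\;\lesssim\;\int\sum_k|T_{m_k}\tilde P_kf|^2\,M^2 w\;\lesssim\;\int\sum_k|\tilde P_k f|^2\,M^5 w\;\lesssim\;\int|f|^2\,M^7 w,$$
which is the desired conclusion.

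The main technical point is not the dyadic decomposition or the per-piece application of \eqref{basicBV}, which are immediate from the hypothesis, but the bookkeeping of $M$-powers in the two weighted Littlewood--Paley steps, each contributing two factors of $M$. The exponent $7=2+3+2$ is then a transparent decomposition into a ``reverse'' LP cost, the bounded-variation cost from \eqref{basicBV}, and a ``direct'' LP square function cost.
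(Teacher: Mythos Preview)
Your proof is correct and follows exactly the approach the paper indicates: combine the single-piece bounded-variation estimate \eqref{basicBV} with weighted Littlewood--Paley theory for the dyadic decomposition, as in \cite{W2}, \cite{BH} and \cite{Kurtz}. The paper only sketches this (``Invoking classical weighted Littlewood--Paley theory for dyadic decompositions of the line \ldots\ leads to the following''), and your $7=2+3+2$ bookkeeping is precisely the fleshing-out of that sketch.
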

The control of $m$ here by a power of the Hardy--Littlewood maximal operator is optimal in the sense that Theorem \ref{weightedmarcperez}, combined with the Hardy--Littlewood maximal theorem, implies the classical Marcinkiewicz multiplier theorem via \eqref{duality}. It would seem unlikely that the particular power of $M$ that features here is best-possible; here and throughout this paper we do not concern ourselves with such finer points.

Our goal is to establish versions of Theorem \ref{weightedmarcperez} which apply to much more singular (or ``rougher") multipliers.
A natural class of singular multipliers on the line, defined in terms of the so-called ``$r$-variation" was introduced by Coifman, Rubio de Francia and Semmes in \cite{CRdeFS}. For a function $m$ on an interval $[a,b]$ we define the $r$-variation of $m$ to be the supremum of the quantity
$$
\Bigl(\sum_{j=0}^{N-1}|m(x_{j+1})-m(x_j)|^r\Bigr)^{1/r}
$$
over all partitions $a=x_0<x_1<\cdots<x_N=b$ of $[a,b]$. We say that $m$ is a \emph{$V_r$ multiplier} if it has uniformly bounded $r$-variation on each dyadic interval. (Of course if $r=1$ this class reduces to the classical Marcinkiewicz multipliers.)
In \cite{CRdeFS} it is shown that if $m$ is a $V_r$ multiplier then $m$ is an $L^p(\mathbb{R})$ multiplier for $|1/p-1/2|<1/r$, considerably generalising the classical Marcinkiewicz multiplier theorem on the line.  With the possible exception of the endpoint, this result is sharp as may be seen from the specific multipliers
\begin{equation}\label{example1}
m_{\alpha,\beta}(\xi):=\frac{e^{i|\xi|^\alpha}}{(1+|\xi|^2)^{\beta/2}}; \;\;\;\alpha,\beta\geq 0,
\end{equation}
first studied by Hirschman \cite{Hirsch} (see \cite{S0} for further discussion). Indeed $m_{\alpha,\beta}$ is a $V_{r}$ multiplier if $\beta r=\alpha$, while being an $L^p$ multiplier if and only if $\alpha |1/p-1/2|\leq\beta$; see \cite{Hirsch}, \cite{Miy}. The endpoint case $|1/p-1/2|=1/r$ remains open in general for $V_r$ multipliers -- see \cite{TW} for further discussion and related results.


For the purposes of identifying \emph{optimal} controlling maximal operators we will confine attention to a subclass of the $V_r$ multipliers that retains some of the structure of the specific example \eqref{example1}.
Before we describe this subclass let us discuss some motivating examples.

The multiplier corresponding to the convolution kernel $e^{ix^\ell}$ appearing in \eqref{SamJon} coincides with the (generalised) Airy function
$$
Ai^{(\ell)}(\xi)=\int_{-\infty}^\infty e^{i(x^\ell+x\xi)}dx=c_0\frac{e^{ic_1|\xi|^{\frac{\ell}{\ell-1}}}+o(1)}{|\xi|^{\frac{\ell-2}{2(\ell-1)}}}
$$
as $|\xi|\rightarrow\infty$; here $c_0$ and $c_1$ are appropriate constants.
As standard Airy function asymptotics reveal, 
the variation of this multiplier on dyadic intervals is unbounded. This multiplier, with its highly oscillatory behaviour as $|\xi|\rightarrow\infty$, belongs to a more general class of multipliers satisfying
\begin{equation}\label{airyinf}
m(\xi)=O(|\xi|^{-\beta}), \;\;\; m'(\xi)=O(|\xi|^{-\beta+\alpha-1})
\end{equation}
as $|\xi|\rightarrow\infty$. Here $\alpha,\beta\geq 0$, and of course the specific multiplier in \eqref{example1} is a model example.
In addition to multipliers whose derivatives can have strong singularities at \emph{infinity}, it is also natural to consider those which are singular at a \emph{point}. In particular, we might hope to control multipliers
satisfying \eqref{airyinf} as $|\xi|\rightarrow 0$ for $\alpha,\beta\leq 0$. Such singular multipliers, which were studied by Miyachi in \cite{Miy0} and \cite{Miy}, arise frequently in the study of oscillatory and oscillatory-singular integrals; see for example
\cite{S1}, \cite{Miy}, \cite{Sj} and \cite{CKS2}. See also \cite{Miy0} and
\cite{Miy} for a general $L^p(\mathbb{R})$ (and Hardy space $H^p(\mathbb{R})$) multiplier theorem under the
specific hypothesis \eqref{airyinf}. The following class of multipliers, which we denote $\mathcal{C}(\alpha,\beta)$, involves a Marcinkiewicz-type variation condition specifically designed to capture these Miyachi-type examples.
\subsection*{The class of multipliers}
For each $\alpha,\beta\in\mathbb{R}$ let $\mathcal{C}(\alpha,\beta)$ be the class of functions $m:\mathbb{R}\rightarrow\mathbb{C}$ for which
\begin{equation}\label{hyp0inf}
\supp(m)\subseteq \{\xi:|\xi|^\alpha\geq 1\},
\end{equation}
\begin{equation}\label{hyp1inf}
\sup_{\xi}\;|\xi|^\beta|m(\xi)|<\infty
\end{equation}
and
\begin{equation}\label{hyp2inf}
\sup_{R^\alpha\geq 1}\sup_{\substack{I\subseteq [R,2R]\\ \ell(I)=R^{-\alpha}R}}R^{\beta}\int_{\pm I}|m'(\xi)|d\xi<\infty.
\end{equation}
Here the supremum is taken over all subintervals $I$ of $[R,2R]$ of length $\ell(I)=R^{-\alpha}R$.
\subsubsection*{Remarks}
\begin{itemize}
\item[(i)]
The support condition \eqref{hyp0inf} has no content for $\alpha=0$. For $\alpha>0$ and $\alpha<0$ it reduces to $\supp(m)\subseteq \{|\xi|\geq 1\}$ and $\supp(m)\subseteq \{|\xi|\leq 1\}$ respectively. A similar interpretation applies to the outermost supremum in \eqref{hyp2inf}.
\item[(ii)]
The case $\alpha=0$ is of course somewhat degenerate.
As is easily verified, the
class $\mathcal{C}(\alpha,\beta)$ reduces to the classical Marcinkiewicz multipliers when $\alpha=\beta=0$. Further, the fractional
integration multiplier $\xi\mapsto |\xi|^{-\beta}\in\mathcal{C}(0,\beta)$.
\item[(iii)] The model behaviour of a multiplier in $\mathcal{C}(\alpha,\beta)$ in the nondegenerate case $\alpha\not=0$ is that of the Miyachi multipliers \eqref{airyinf} as $|\xi|^\alpha\rightarrow\infty$.
\item[(iv)] An elementary calculation reveals that if $m\in\mathcal{C}(\alpha,\beta)$ then $m$ is a $V_r$ multiplier provided $\beta r=\alpha$.
We also note that the additional structure of the class $\mathcal{C}(\alpha,\beta)$ yields $L^p-L^q$ estimates
    for certain $q\not=p$ -- see the forthcoming Theorem \ref{mainmult}.
\item[(v)]
An elementary change of variables argument reveals that a multiplier $m\in\mathcal{C}(\alpha,\beta)$ if and only if $\widetilde{m}\in\mathcal{C}(-\alpha,-\beta)$, where $\widetilde{m}(\xi):=m(1/\xi)$.
The main point is that the diffeomorphism $\xi\mapsto 1/\xi:\mathbb{R}\backslash\{0\}\rightarrow \mathbb{R}\backslash\{0\}$ preserves dyadic intervals and (essentially) any lattice structure within them.
\item[(vi)] Unlike the $V_r$ multipliers, if $\alpha\not=0$ the class $\mathcal{C}(\alpha,\beta)$ is not dilation-invariant due to the distinguished role of the unit scale $R=1$. See the forthcoming Theorem \ref{main1q} for a natural dilation-invariant formulation.
\end{itemize}
We now introduce the family of maximal operators that will control these multipliers via \eqref{want}.
\subsection*{The controlling maximal operators}
For $\alpha,\beta\in\mathbb{R}$ we define the maximal operator $\mathcal{M}_{\alpha,\beta}$ by
\begin{equation}\label{maxdef}
\mathcal{M}_{\alpha, \beta}f(x)=\sup_{(r,y)\in \Gamma_{\alpha}(x)}\frac{r^{2\beta}}{r}\int_{|y-z|\leq r}f(z)dz
\end{equation}
where
\begin{equation}\label{regiondef}
\Gamma_\alpha(x)=\{(r,y):0<r^\alpha\leq 1\;\;\mbox{ and }\;\;|y-x|\leq r^{1-\alpha}\}.
\end{equation}
This family of maximal operators is of some independent interest.
When $\alpha=0$ the approach region $\Gamma_\alpha(x)$ is simply a cone with vertex $x$, and the associated maximal operator $\mathcal{M}_{\alpha, \beta}$ is equivalent to the classical fractional Hardy--Littlewood maximal operator
\begin{equation}\label{fractmax}
M_{2\beta} w(x):=\sup_{r>0}\frac{r^{2\beta}}{r}\int_{x-r}^{x+r}w.
\end{equation}
When $0< \alpha<1$ the maximal operators $\mathcal{M}_{\alpha,\beta}$ have also been considered before and originate in work of Nagel and Stein \cite{NS} on fractional maximal operators associated with more general nontangential approach regions.
However, as we have already mentioned, the above definitions also permit $\alpha\geq 1$ and $\alpha<0$, where one sees dramatic transitions in the nature of the region $\Gamma_\alpha$. In particular if $\alpha\geq 1$ then the situation is similar to that in \eqref{SamJonApp}, where tangential approach to infinite order is permitted; see \cite{BCSV} for the origins of such regions. Furthermore, for $\alpha<0$ we have
$$\Gamma_\alpha(x)=\{(r,y):r\geq 1\;\;\mbox{ and }\;\;|y-x|\leq r^{1-\alpha}\},$$
which may be viewed as an ``escape", rather than ``approach", region. Notice also that if $\beta<0$ we interpret $\mathcal{M}_{\alpha,\beta}$ as an \emph{improper}-fractional maximal operator.

The maximal operators $\mathcal{M}_{\alpha,\beta}$ are significant improvements on the controlling maximal operators $w\mapsto (Mw^s)^{1/s}$ that typically arise via classical $A_p$-weighted inequalities. Crudely estimating $\mathcal{M}_{\alpha,\beta}w$ pointwise using H\"older's inequality reveals that
\begin{equation}\label{crude}
\mathcal{M}_{\alpha,\beta}w\leq (M w^s)^{1/s}\;\;\mbox{ when }\;\; 2s\beta=\alpha.
\end{equation}
This allows the forthcoming Theorem \ref{main1} to be reconciled with certain $A_p$-weighted inequalities established by Chanillo, Kurt and Sampson in \cite{CKS}, \cite{CKS2}.
In Section \ref{appl} we provide necessary and sufficient conditions for $\mathcal{M}_{\alpha,\beta}$ to be bounded from $L^p$ to $L^q$. In particular, we see that $\mathcal{M}_{\alpha,\beta}$ is bounded on $L^s$ when $2s\beta=\alpha$; a property that does not follow from \eqref{crude}.

The main result of this paper is the following.
\begin{theorem}\label{main1} Let $\alpha,\beta\in\mathbb{R}$. If $m\in\mathcal{C}(\alpha,\beta)$ then
\begin{equation}\label{maininequality1}
\int_{\mathbb{R}}|T_mf|^2w\lesssim\int_{\mathbb{R}}|f|^2M^6\mathcal{M}_{\alpha,\beta}M^4 w.
\end{equation}
\end{theorem}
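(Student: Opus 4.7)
The plan is to establish Theorem \ref{main1} via a two-level Littlewood--Paley decomposition: first at the dyadic frequency scale $R$, and then at the finer scale $\ell := R^{1-\alpha}$ dictated by the variation hypothesis \eqref{hyp2inf}. With $\{\psi_R\}$ a smooth dyadic partition of unity, write $m = \sum_R m_R$ with $m_R = m\psi_R$, summed over dyadic $R$ with $R^\alpha \geq 1$. By the weighted Littlewood--Paley inequality (as invoked in the proof of Theorem \ref{weightedmarcperez}), it suffices to establish the single-scale estimate
\begin{equation*}
\int_{\mathbb{R}} |T_{m_R} g|^2 w \lesssim \int_{\mathbb{R}} |g|^2 \mathcal{M}_{\alpha,\beta} M^{a} w
\end{equation*}
for $g$ frequency-localised to $\{|\xi|\sim R\}$; coupled with a reverse weighted square function bound to reassemble $f = \sum_R P_R^* f$ against the non-standard weight $\mathcal{M}_{\alpha,\beta} M^a w$, this yields an estimate of the form \eqref{maininequality1}.

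For the single-scale estimate, partition $[R,2R]\cup[-2R,-R]$ into $\sim R^\alpha$ equal-length subintervals $I_\nu$ of length $\ell$ via a smooth partition of unity $\{\phi_\nu\}$, and set $m_\nu := m_R\phi_\nu$; hypotheses \eqref{hyp1inf}--\eqref{hyp2inf} give $\|m_\nu\|_\infty, \|m_\nu'\|_{L^1} \lesssim R^{-\beta}$. The Marcinkiewicz representation \eqref{marcrep} for the compactly supported BV-multiplier $m_\nu$ reads $T_{m_\nu}g = \tfrac{1}{2}\int m_\nu'(t)(I + iM_{-t}HM_tg)\,dt$; Cauchy--Schwarz against $|m_\nu'(t)|\,dt$ of mass $\lesssim R^{-\beta}$, together with Wilson's weighted Hilbert transform bound \eqref{hilbert}, yields the per-piece estimate $\int|T_{m_\nu}(P_{I_\nu}^* g)|^2 W \lesssim R^{-2\beta} \int |P_{I_\nu}^* g|^2 M^3 W$ for any positive $W$. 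The maximal operator $\mathcal{M}_{\alpha,\beta}$ enters through the observation that the Fourier support of $|T_{m_\nu}g|^2$ lies in $I_\nu - I_\nu \subseteq [-\ell,\ell]$, allowing $w$ to be replaced by its smoothing $w * \eta_\ell$ (with $\widehat{\eta_\ell}\equiv 1$ on $[-\ell,\ell]$); this smoothing is pointwise controlled by an average of $w$ at the tangential scale $\ell^{-1} = R^{\alpha-1}$, plus tails absorbable into $Mw$. Choosing $r = 1/R$ in \eqref{maxdef}--\eqref{regiondef} produces tubes of length $r^{1-\alpha} = R^{\alpha-1}$, balls of radius $r = 1/R$, and a weight $r^{2\beta-1} = R^{1-2\beta}$ that matches the $R^{-2\beta}$ prefactor; this gives the pointwise comparison $R^{-2\beta}(w*\eta_\ell)(x) \lesssim \mathcal{M}_{\alpha,\beta}w(x) + Mw(x)$, which converts the per-piece estimate to the desired form. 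Summing over $\nu$ via the weighted Rubio de Francia square function inequality for the equal intervals $\{I_\nu\}$ (to recover $|g|^2$ from $\sum_\nu|P_{I_\nu}^* g|^2$), and passing from $\sum_\nu |T_{m_\nu}g|^2$ back to $|T_{m_R}g|^2$ via the matching weighted local Littlewood--Paley inequality, completes the proof of the single-scale estimate.

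The principal obstacle is the Littlewood--Paley bookkeeping. A naive $\nu$-sum of per-piece Cauchy--Schwarz bounds would incur a wasteful factor of $R^\alpha$, which must be cancelled by exploiting $T_{m_\nu}g = T_{m_\nu} P_{I_\nu}^* g$ and the mutual orthogonality of the frequency-disjoint pieces $\{P_{I_\nu}^* g\}_\nu$. The weighted local Rubio de Francia bound for equal-length intervals and the pointwise comparison of $w * \eta_\ell$ with $\mathcal{M}_{\alpha,\beta} w$ must then be implemented so that exactly the advertised iterated Hardy--Littlewood factors $M^6$ and $M^4$ emerge.
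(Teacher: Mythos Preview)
Your proposal is essentially the paper's own argument: reduce to a single dyadic scale $R$, decompose into intervals of length $R^{1-\alpha}$, handle each piece via the bounded-variation representation \eqref{marcrep} together with Wilson's bound \eqref{hilbert}, and reassemble through forward and reverse weighted square-function inequalities, with the $R^{-2\beta}$ prefactor absorbed into $\mathcal{M}_{\alpha,\beta}$ by the scale-matching you describe.

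The one point to be careful about is the mechanism by which $\mathcal{M}_{\alpha,\beta}$ actually enters. You locate it in the per-piece identity $\int|T_{m_\nu}g|^2 w = \int|T_{m_\nu}g|^2 (w*\eta_\ell)$, but this smoothing, while valid for each $\nu$ separately, does not by itself yield the reverse inequality $\int|T_{m_R}g|^2 w \lesssim \int\sum_\nu|T_{m_\nu}g|^2(\cdot)$ that you also need and invoke as the ``matching weighted local Littlewood--Paley inequality.'' In the paper this reverse step is Theorem~\ref{mainLP}, whose proof uses \emph{two} scales: the Fourier support of $T_{m_R}g$ in an interval of length $R$ first mollifies $w$ at scale $1/R$, and only then can one pass (via the sup-then-convolve construction of Lemma~\ref{stilldom}) to a weight smooth at scale $1/R'=R^{\alpha-1}$ that dominates the mollified $w$ and makes the pieces almost orthogonal. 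The resulting two-scale operator $A_{R,R'}$ is what gets compared to $\mathcal{M}_{\alpha,\beta}$. Once Theorem~\ref{mainLP} is applied first, your per-piece smoothing becomes redundant: the paper runs the BV + Hilbert argument directly against the weight $MA_{R,R'}Mw$ produced by Theorem~\ref{mainLP}, and never mollifies piece by piece. So your architecture is correct, but note that the load-bearing technical ingredient is the reverse square-function estimate (Theorem~\ref{mainLP}), and $\mathcal{M}_{\alpha,\beta}$ emerges from its two-scale structure rather than from the one-scale per-piece mollification you emphasise.
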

It is interesting to contrast this result with the recent weighted variational Carleson theorem of Do and Lacey \cite{DoLacey}; see also \cite{EMTTW},
\cite{Lacey}.

As may be expected, the factors of Hardy--Littlewood maximal operator $M$ arising in Theorem \ref{main1} are of secondary importance, and to some extent occur for technical reasons. Since $M$ is bounded on $L^p$ for all $1<p\leq \infty$, the maximal operators $M^6\mathcal{M}_{\alpha,\beta}M^4$ and $\mathcal{M}_{\alpha,\beta}$ share the same $L^p-L^q$ bounds. The forthcoming Theorem \ref{mainmaximal} clarifies the $L^p-L^q$ behaviour of these operators.

It is perhaps helpful to make some further remarks about the nonsingular case $\alpha=0$ of the above theorem. As is immediately verified, the class of multipliers $\mathcal{C}(0,\beta)$ is precisely those satisfying the conditions
\begin{equation}\label{bdd}
\sup_{\xi\in\mathbb{R}}|\xi|^\beta|m(\xi)|<\infty
\end{equation}
and
\begin{equation}\label{hypnonsing}
\sup_{R>0}\;R^\beta\int_{R\leq |\xi|\leq 2R}|m'(\xi)|d\xi<\infty.
\end{equation}
For such ``classical" multipliers, Theorem \ref{main1} reduces to the weighted inequality
\begin{equation}\label{genperez}
\int_{\mathbb{R}}|T_mf|^2w\lesssim\int_{\mathbb{R}}|f|^2M^6 M_{2\beta}M^4 w,
\end{equation}
where $M_{2\beta}$ is the fractional Hardy--Littlewood maximal operator given by \eqref{fractmax}. When $\beta=0$ the conditions \eqref{bdd} and \eqref{hypnonsing} become those of the classical Marcinkiewicz multiplier theorem, and the resulting inequality \eqref{genperez} reduces -- up to factors of $M$ -- to the classical Theorem \ref{weightedmarcperez}. Noting that the multiplier $\xi\mapsto |\xi|^{-\beta}\in \mathcal{C}(0,\beta)$, again up to factors
of $M$ we recover the $1$-dimensional case of P\'erez's result in \cite{P2}.

Of course the class $\mathcal{C}(\alpha,\beta)$ is neither scale-invariant, nor facilitates quantification of the implicit constants in Theorem \ref{main1}. Our arguments, along with elementary scaling considerations, reveal the following.
\begin{theorem}\label{main1q}
Let $\alpha, \beta\in\mathbb{R}$ and $\lambda,C>0$. If $m:\mathbb{R}\rightarrow\mathbb{C}$ is such that
\begin{equation}\label{thm1q0}
\supp(m)\subseteq\{\xi:|\xi|^\alpha\geq \lambda^\alpha\},
\end{equation}
\begin{equation}\label{thm1q1}
\sup_{\xi}|\xi|^\beta |m(\xi)|\leq C
\end{equation}
and
\begin{equation}\label{thm1q2}
\sup_{R^\alpha\geq \lambda^\alpha}\sup_{\substack{I\subseteq [R,2R]\\\ell(I)=(R/\lambda)^{-\alpha}R}}R^\beta\int_{\pm I}|m'(\xi)|d\xi\leq C,
\end{equation}
then there exists an absolute constant $c>0$ such that
\begin{equation}\label{weightedquant}
\int_{\mathbb{R}}|T_mf|^2w\leq cC^2\int_{\mathbb{R}}|f|^2M^6\mathcal{M}_{\alpha,\beta}^{\lambda}M^4w,
\end{equation}
where
$$
\mathcal{M}_{\alpha,\beta}^{\lambda}w(x)=\sup_{(y,r)\in \Gamma_\alpha^{\lambda}(x)}\frac{r^{2\beta}}{r}\int_{y-r}^{y+r}w
$$
and
$$
\Gamma_\alpha^{\lambda}(x)=\{(y,r):0<r^\alpha\leq \lambda^{-\alpha}, \;\;\; |x-y|\leq \lambda^{-\alpha}r^{1-\alpha}\}.
$$
\end{theorem}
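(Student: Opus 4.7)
The plan is to deduce Theorem~\ref{main1q} from Theorem~\ref{main1} by an elementary rescaling. Set
\[
\tilde{m}(\xi):=C^{-1}\lambda^{\beta}m(\lambda\xi).
\]
First I would show that $\tilde{m}\in\mathcal{C}(\alpha,\beta)$ with the three defining suprema bounded by absolute constants, independent of $\lambda$ and $C$; once this is established, Theorem~\ref{main1} applies to $\tilde{m}$ with a universal implicit constant, and the proof reduces to transferring the resulting inequality back to $m$.

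The verification is a direct computation. The support condition \eqref{hyp0inf} for $\tilde{m}$ follows from \eqref{thm1q0}, since $|\lambda\xi|^\alpha\geq\lambda^\alpha$ is equivalent to $|\xi|^\alpha\geq 1$. The pointwise bound $|\xi|^\beta|\tilde{m}(\xi)|=C^{-1}|\lambda\xi|^\beta|m(\lambda\xi)|\leq 1$ is immediate from \eqref{thm1q1}. For \eqref{hyp2inf}, the substitution $\eta=\lambda\xi$ sends a subinterval $I\subseteq [R,2R]$ of length $R^{1-\alpha}$ onto $\lambda I\subseteq[\lambda R,2\lambda R]$ of length $(\lambda R/\lambda)^{-\alpha}(\lambda R)$, which is exactly an admissible scale in \eqref{thm1q2} with $R$ replaced by $\lambda R$; a short calculation then shows that the $\lambda$-factors conspire with the prefactor $C^{-1}\lambda^\beta$ to make the resulting constant at most~$1$.

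The next step is to record the transformation laws under the dilation $\Phi_\lambda h(x):=h(x/\lambda)$. A standard Fourier computation gives
\[
T_{\tilde{m}}f(x)=\frac{\lambda^\beta}{C}\,T_m\bigl(f(\lambda\,\cdot)\bigr)(x/\lambda),
\]
while inspecting the defining suprema yields $M\circ\Phi_\lambda=\Phi_\lambda\circ M$ (scale-invariance of the Hardy--Littlewood maximal operator) and
\[
\mathcal{M}_{\alpha,\beta}(\Phi_\lambda\omega)(x)=\lambda^{2\beta}\bigl(\mathcal{M}_{\alpha,\beta}^\lambda\omega\bigr)(x/\lambda).
\]
The latter identity comes from the substitution $(r,y)=(\lambda s,\lambda u)$, which maps $\Gamma_\alpha(x)$ bijectively onto $\Gamma_\alpha^\lambda(x/\lambda)$ and contributes the factor $\lambda^{2\beta}$ through $r^{2\beta}$.

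Finally, applying Theorem~\ref{main1} to $\tilde{m}$ with weight $\Phi_\lambda\omega$ and changing variables $y=x/\lambda$ on both sides, the factor $\lambda^{2\beta+1}/C^2$ on the left produced by $|T_{\tilde{m}}f|^2$ together with its Jacobian cancels the $\lambda^{2\beta+1}$ on the right coming from $\mathcal{M}_{\alpha,\beta}$ together with its Jacobian, leaving precisely \eqref{weightedquant} applied to $g(y):=f(\lambda y)$. Since $f$ is arbitrary, so is $g$. The main obstacle throughout is simply the careful bookkeeping of $\lambda$-powers coming from dilating the multiplier, the input function, the weight, and the various maximal operators; no additional analytic input beyond Theorem~\ref{main1} is needed.
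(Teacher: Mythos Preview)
Your approach is correct and is exactly the argument the paper has in mind: the paper does not give a separate proof of Theorem~\ref{main1q}, but simply notes before the statement that ``our arguments, along with elementary scaling considerations'' yield it, and immediately after records the key scaling observation that $\eta^\beta m(\eta\,\cdot)$ satisfies \eqref{thm1q0}--\eqref{thm1q2} with $\lambda=1$ whenever $m$ satisfies them with $\lambda=\eta$ --- precisely your $\tilde m$ (up to the harmless extra normalisation by $C^{-1}$). One small point worth making explicit: applying Theorem~\ref{main1} as a black box requires knowing that its implicit constant depends only on the suprema \eqref{hyp1inf}--\eqref{hyp2inf}; this is clear from the proof in Section~\ref{proofmain1} (the hypotheses enter only through Minkowski and the pointwise bound on $m(a_k)$), and is what the paper means by ``our arguments''.
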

The hypotheses of Theorem \ref{main1q} are scale-invariant. More precisely, if $m$ satisfies \eqref{thm1q0}-\eqref{thm1q2} with parameter $\lambda=\eta$, then $\eta^\beta m(\eta\cdot)$ satisfies \eqref{thm1q0}-\eqref{thm1q2} with parameter $\lambda=1$.
\subsubsection*{Organisation of the paper}
Our proof of Theorem \ref{main1} rests crucially on a certain Littlewood--Paley type square function estimate. This is presented in Section \ref{LPSection}. Section \ref{proofmain1} contains the proof of Theorem \ref{main1}, Section \ref{Sectionhigher} concerns extensions to higher dimensions, and finally Section \ref{boundingmax} is devoted to the $L^p-L^q$ boundedness properties of the maximal operators $\mathcal{M}_{\alpha,\beta}$.
We begin by presenting some applications and interpretations of Theorem \ref{main1}.
\subsection*{Acknowledgments} We would like to thank Tony Carbery, Javi Duoandikoetxea, Sam Harrison, Luis Vega and Jim Wright for a number of helpful conversations on aspects of this paper.

\section{Applications and interpretations}\label{appl}
Here we present three distinct applications (or interpretations) of Theorem \ref{main1}.
\subsection{$L^p-L^q$ multipliers}
Our first application of Theorem \ref{main1} is to the theory of $L^p-L^q$ multipliers on the line. Such a multiplier theorem will follow from Theorem \ref{main1} via \eqref{duality} once we have suitable bounds on the maximal operators $\mathcal{M}_{\alpha,\beta}$.
\begin{theorem}\label{mainmaximal}
Let $1<p\leq q\leq\infty$ and $\alpha,\beta\in\mathbb{R}$. If $\alpha>0$ then $\mathcal{M}_{\alpha,\beta}$ is bounded from $L^p(\mathbb{R})$ to $L^q(\mathbb{R})$ if and only if
\begin{equation}\label{abpqpos}
\beta\geq\frac{\alpha}{2q}+\frac{1}{2}\Bigl(\frac{1}{p}-\frac{1}{q}\Bigr).
\end{equation}
If $\alpha=0$ then $\mathcal{M}_{\alpha,\beta}$ is bounded from $L^p(\mathbb{R})$ to $L^q(\mathbb{R})$ if and only if
\begin{equation}\label{abpqzero}
\beta=\frac{1}{2}\Bigl(\frac{1}{p}-\frac{1}{q}\Bigr).
\end{equation}
If $\alpha<0$ then $\mathcal{M}_{\alpha,\beta}$ is bounded from $L^p(\mathbb{R})$ to $L^q(\mathbb{R})$ if and only if
\begin{equation}\label{abpqneg}
\beta\leq\frac{\alpha}{2q}+\frac{1}{2}\Bigl(\frac{1}{p}-\frac{1}{q}\Bigr).
\end{equation}
\end{theorem}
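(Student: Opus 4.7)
The theorem splits naturally into necessity and sufficiency, which I would handle separately.

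For the \textbf{necessity}, my plan is to test $\mathcal{M}_{\alpha, \beta}$ against $f = \chi_{[0, r_0]}$, with $r_0 \in (0, 1]$ shrinking to zero when $\alpha > 0$, with $r_0 \geq 1$ growing to infinity when $\alpha < 0$, and with any $r_0 > 0$ when $\alpha = 0$. Taking $r = r_0$ and $y = r_0/2$ in the supremum defining $\mathcal{M}_{\alpha, \beta}$, one checks directly that $(r, y) \in \Gamma_\alpha(x)$ for every $x$ in an interval of length comparable to $r_0^{1-\alpha}$, on which $\mathcal{M}_{\alpha, \beta} f(x) \gtrsim r_0^{2\beta}$. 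Comparing $\|\mathcal{M}_{\alpha, \beta} f\|_q \gtrsim r_0^{2\beta + (1-\alpha)/q}$ with $\|f\|_p = r_0^{1/p}$ and sending $r_0$ to its extremal value yields the correct one-sided bound on $\beta$, which rearranges to the condition in \eqref{abpqpos}, \eqref{abpqzero}, or \eqref{abpqneg} according to the sign of $\alpha$.

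For the \textbf{sufficiency}, my plan is a dyadic decomposition $\mathcal{M}_{\alpha, \beta} f \sim \sup_k \mathcal{M}^{r_k} f$, where $\mathcal{M}^r$ restricts the defining supremum to a single scale $r$ and $\{r_k\}$ are the dyadic scales compatible with $\Gamma_\alpha$. Hölder's inequality bounds the inner integral by $(2r)^{1/p'} \|f\|_{L^p(B(y, r))}$, and the supremum over $|y-x| \leq r^{1-\alpha}$ is controlled by tiling $\mathbb{R}$ into intervals of length $r^{1-\alpha}$: since $r \leq r^{1-\alpha}$, each relevant ball $B(y, r)$ lies in the fivefold dilate $5I$ of whichever tile $I$ contains $x$. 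Combining the bounded overlap of these dilates with the sequence embedding $\ell^p \hookrightarrow \ell^q$ (valid for $p \leq q$) yields the scale-by-scale estimate
\[
\|\mathcal{M}^r f\|_q \;\lesssim\; r^{2(\beta - \beta_c(p, q))} \|f\|_p,
\]
where $\beta_c(p, q)$ denotes the critical exponent on the right-hand side of \eqref{abpqpos}--\eqref{abpqneg}. Summing the resulting geometric series over the admissible dyadic scales converges whenever $\beta$ lies strictly on the correct side of $\beta_c(p, q)$.

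The \textbf{main obstacle} is the endpoint $\beta = \beta_c(p, q)$, where the geometric series just fails to converge. My plan is to interpolate the Hölder bound with the ``localised fractional average'' estimate
\[
\int_{B(y, r)} f \;\leq\; C r^{1 - \alpha/p} (M f^p)^{1/p}(x), \qquad y \in B(x, r^{1-\alpha}),
\]
which follows from the containment $B(y, r) \subset B(x, 2 r^{1-\alpha})$ together with Jensen's inequality applied to $f^p$. The geometric mean with weight $\theta = 1 - p/q$ cancels the $r$-dependence precisely at $\beta = \beta_c$, producing the Hedberg-type pointwise bound $\mathcal{M}_{\alpha, \beta_c} f(x) \lesssim \|f\|_p^{1 - p/q} (M f^p)^{1/q}(x)$. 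Combining with the weak-type $(1, 1)$ inequality for $M$ applied to $f^p$ delivers a weak-type $L^p \to L^{q, \infty}$ bound, and strong-type at $(p, q)$ then follows by Marcinkiewicz interpolation along the critical line $\{(p, q) : \beta_c(p, q) = \beta\}$, every point of which admits the same weak-type estimate. The degenerate case $\alpha = 0$ collapses directly to the classical fractional Hardy--Littlewood theory for $M_{2\beta}$.
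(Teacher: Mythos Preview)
Your necessity argument and the paper's coincide. For sufficiency your route is genuinely different from the paper's, and away from the diagonal it works: the single-scale estimate $\|\mathcal{M}^r f\|_q\lesssim r^{2(\beta-\beta_c)}\|f\|_p$ is correct, the geometric series handles the open region, and on the critical line your Hedberg bound $\mathcal{M}_{\alpha,\beta_c}f(x)\lesssim \|f\|_p^{1-p/q}(Mf^p)^{1/q}(x)$ gives weak $L^p\to L^{q,\infty}$, which Marcinkiewicz upgrades to strong type at any \emph{interior} point of the critical segment. This is more elementary than the paper's method, which bypasses the strict case entirely.

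The gap is at the diagonal endpoint $p=q$. For $\alpha\neq 0$ and $\beta$ fixed, the critical line meets $\{p=q\}$ at the single point $p_0=\alpha/(2\beta)$, and on the critical line one has $1/p-1/q=\alpha(1/p_0-1/q)$, so $(p_0,p_0)$ is always a \emph{boundary} point of the critical segment within $\{p\le q\}$: the segment lies entirely in $q\ge p_0$ when $\alpha>0$ and in $q\le p_0$ when $\alpha<0$. Marcinkiewicz interpolation only gives strong type strictly between the weak endpoints, so your scheme yields weak but not strong $(p_0,p_0)$. At that point your Hedberg bound degenerates ($\theta=0$) to $\mathcal{M}_{\alpha,\beta}f\lesssim(Mf^{p_0})^{1/p_0}$, which is exactly weak $(p_0,p_0)$ and no better; and there is no admissible point on the other side of the diagonal to interpolate against, since translation-invariance rules out any $L^p\to L^q$ estimate with $p>q$. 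The off-critical strong bounds you have (in the open region) do not help either, as no segment joining them to an available weak endpoint passes through $(1/p_0,1/p_0)$.

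The paper closes precisely this gap by a different device: it regularises the averages, proves the Hardy-space endpoint $\widetilde{\mathcal{M}}_{\alpha,\alpha/2}:H^1\to L^1$ via direct atomic estimates, and then runs \emph{analytic} interpolation in the parameter $\beta$ against the trivial bounds $\widetilde{\mathcal{M}}_{\alpha,0}:L^\infty\to L^\infty$ and $\widetilde{\mathcal{M}}_{\alpha,1/2}:L^1\to L^\infty$. The $H^1$ input supplies the missing information below $p_0$, so that every diagonal point becomes an interior point of the interpolation family.
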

\subsubsection*{Remarks}
When $\alpha=0$ Theorem \ref{mainmaximal} of course reduces to the well-known $L^p-L^q$ boundedness properties of the classical fractional Hardy--Littlewood maximal operator in one dimension -- see \cite{MW}. For $0\leq \alpha<1$ (the case of nontangential approach regions) and $p=q$, this result follows from the work of Nagel and Stein \cite{NS}. Certain particular cases of Theorem \ref{mainmaximal} in the region $\alpha>1$ are established in \cite{BH}, following arguments in \cite{BCSV}.
Our proof, which extends further the arguments in \cite{BCSV}, follows by establishing a corresponding endpoint Hardy space result when $p=1$ -- see Section \ref{boundingmax}.

Combining Theorems \ref{main1} and \ref{mainmaximal} yields the following unweighted Marcinkiewicz-type multiplier theorem.
\begin{corollary}\label{mainmult}
Let $2\leq p\leq q<\infty$, $\alpha,\beta\in\mathbb{R}$ and suppose $m\in\mathcal{C}(\alpha,\beta)$. If $\alpha>0$ and $$\beta\geq\alpha\left(\frac{1}{2}-\frac{1}{p}\right)+\frac{1}{p}-\frac{1}{q},$$ or $\alpha=0$ and $$\beta=\frac{1}{p}-\frac{1}{q},$$
or $\alpha< 0$ and $$\beta\leq\alpha\left(\frac{1}{2}-\frac{1}{p}\right)+\frac{1}{p}-\frac{1}{q},$$ then $m$ is an $L^p(\mathbb{R})-L^q(\mathbb{R})$ multiplier.
\end{corollary}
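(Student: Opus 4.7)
The plan is to combine Theorem \ref{main1} with the duality identity \eqref{duality} and the mapping properties of $\mathcal{M}_{\alpha,\beta}$ supplied by Theorem \ref{mainmaximal}. First, by Theorem \ref{main1}, for every admissible $f$ and nonnegative weight $w$,
\begin{equation*}
\int_{\mathbb{R}}|T_mf|^2w\lesssim \int_{\mathbb{R}}|f|^2\,M^6\mathcal{M}_{\alpha,\beta}M^4w.
\end{equation*}
The duality argument recorded in \eqref{duality} then yields
\begin{equation*}
\|m\|_{p,q}\lesssim \|M^6\mathcal{M}_{\alpha,\beta}M^4\|_{L^{(q/2)'}\to L^{(p/2)'}}^{1/2},
\end{equation*}
valid for $p,q\geq 2$. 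Since the Hardy--Littlewood maximal operator is bounded on $L^r$ for every $r\in(1,\infty]$, the composition $M^6\mathcal{M}_{\alpha,\beta}M^4$ has identical $L^r\to L^s$ mapping properties to $\mathcal{M}_{\alpha,\beta}$ as soon as both $r,s>1$, so it suffices to analyse $\mathcal{M}_{\alpha,\beta}$ itself.

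Now fix $2\leq p\leq q<\infty$ and set $P=(q/2)'$, $Q=(p/2)'$. The constraint $q<\infty$ guarantees $P>1$, while $p\leq q$ gives $P\leq Q$, and $Q=\infty$ is permitted in Theorem \ref{mainmaximal}; hence the pair $(P,Q)$ falls within the hypotheses of that theorem. An elementary computation gives
\begin{equation*}
\frac{1}{P}-\frac{1}{Q}=\frac{2}{p}-\frac{2}{q},\qquad \frac{1}{Q}=1-\frac{2}{p},
\end{equation*}
so that
\begin{equation*}
\frac{\alpha}{2Q}+\frac{1}{2}\Bigl(\frac{1}{P}-\frac{1}{Q}\Bigr)=\alpha\Bigl(\frac{1}{2}-\frac{1}{p}\Bigr)+\Bigl(\frac{1}{p}-\frac{1}{q}\Bigr).
\end{equation*}

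With this identity in hand, the three cases of Theorem \ref{mainmaximal} match the three cases of the corollary term-for-term: for $\alpha>0$ the boundedness criterion \eqref{abpqpos} for $\mathcal{M}_{\alpha,\beta}:L^P\to L^Q$ becomes exactly $\beta\geq\alpha(1/2-1/p)+(1/p-1/q)$; for $\alpha=0$ the criterion \eqref{abpqzero} becomes $\beta=1/p-1/q$; and for $\alpha<0$ the criterion \eqref{abpqneg} becomes $\beta\leq\alpha(1/2-1/p)+(1/p-1/q)$. Substituting the resulting $L^P\to L^Q$ bound into the duality inequality above gives $\|m\|_{p,q}<\infty$, which is the desired multiplier statement.

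Since each ingredient is already available in the paper, there is no genuine obstacle here; the one point requiring a moment's care is the arithmetic matching of the exponents under the map $(p,q)\mapsto((q/2)',(p/2)')$, together with checking the boundary cases $p=2$ (where $Q=\infty$) and $p=q$ (where $P=Q$), both of which are covered by Theorem \ref{mainmaximal}.
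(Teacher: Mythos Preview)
Your proposal is correct and follows exactly the approach the paper indicates: it states that Corollary \ref{mainmult} follows by ``combining Theorems \ref{main1} and \ref{mainmaximal}'' via the duality inequality \eqref{duality}, and your argument simply makes this explicit, including the exponent arithmetic and the boundary checks.
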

\subsubsection*{Remarks}
Corollary \ref{mainmult}, which modestly generalises a number of well-known results, is optimal subject to the (inevitable) constraint $p,q\geq 2$ -- see \cite{Miy0} and \cite{Miy}.
However, as the examples in \cite{Miy} and \cite{Miy0} also suggest, unless $p=q$, Corollary \ref{mainmult} is unlikely to lead to optimal results in the full range $1\leq p,q\leq\infty$.
If $\alpha\not=0$ then, by duality and interpolation we may conclude that $m$ is an $L^p(\mathbb{R})$ multiplier for all $1<p<\infty$ satisfying the familiar condition $|1/2-1/p|\leq\beta/\alpha$. This generalises the $L^p$ (as opposed to $H^p$) multiplier results of Miyachi \cite{Miy0} in dimension $n=1$.
If $\alpha=0$ then Corollary \ref{mainmult} reduces to the classical one-dimensional Marcinkiewicz
multiplier theorem on setting $p=q$, since $m$ is a Marcinkiewicz multiplier if and only if $m\in\mathcal{C}(0,0)$. The special case $\alpha=0$ also generalises the classical Hardy--Littlewood--Sobolev theorem on fractional integration since the multiplier $|\xi|^{-\beta}$ belongs to $\mathcal{C}(0,\beta)$.

\subsection{Oscillatory convolution kernels on the line}\label{subsect}
The method of stationary phase permits Theorem \ref{main1} to be applied to a variety of explicit oscillatory convolution operators on the line.
For example, for $a>0$ with $a\not=1$, and $1-a/2\leq b<1$, consider the convolution kernel $K_{a,b}:\mathbb{R}
\backslash\{0\}\rightarrow\mathbb{C}$ given by
$$K_{a,b}(x)=\frac{e^{i|x|^a}}{(1+|x|)^b}.$$
The corresponding convolution operator is well-understood on $L^p$, with
\begin{equation}\label{Sjolinoperators}
\|K_{a,b}*f\|_{p}\lesssim\|f\|_p\;\;\;\iff\;\;\;p_0\leq p\leq p_0',
\end{equation}
where $p_0=\frac{a}{a+b-1}$; see \cite{Sj}, \cite{JS}. As we shall see, an application of Theorem \ref{main1} quickly leads to the following.
\begin{theorem}\label{applSj}
If $a>0$ with $a\not=1$ and $1-a/2\leq b<1$ then
\begin{equation}\label{used}\int_{\mathbb{R}}|K_{a,b}*f|^2w\lesssim
\int_{\mathbb{R}}|f|^2M^6\mathcal{M}_{\alpha,\beta}M^4w
\end{equation}
where $\alpha=\frac{a}{a-1}$ and $\beta=\frac{a/2+b-1}{a-1}$.
\end{theorem}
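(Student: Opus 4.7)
The plan is to apply Theorem~\ref{main1} to the multiplier $m := \widehat{K_{a,b}}$, the main technical step being to verify via the method of stationary phase that a suitable localisation of $m$ belongs to the class $\mathcal{C}(\alpha,\beta)$ with $\alpha = a/(a-1)$ and $\beta = (a/2+b-1)/(a-1)$.

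I would first treat the case $a>1$ (so $\alpha>0$ and $\beta\geq 0$); the range $0<a<1$ (where $\alpha,\beta\leq 0$) is analogous, with the stationary point $|x_0|\sim |\xi|^{1/(a-1)}$ now becoming large precisely as $|\xi|$ becomes small, placing the stationary-phase asymptotic in the region $\{|\xi|\leq 1\} = \{|\xi|^\alpha\geq 1\}$ demanded by the support condition \eqref{hyp0inf}. Write $m = m_{\mathrm{sing}} + m_{\mathrm{reg}}$ via a smooth frequency cutoff, with $m_{\mathrm{sing}}$ supported in $\{|\xi|^\alpha\geq 1\}$ and $m_{\mathrm{reg}}$ in its complement. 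The principal task is to show $m_{\mathrm{sing}}\in\mathcal{C}(\alpha,\beta)$.

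For this I apply stationary phase to
$$
\widehat{K_{a,b}}(\xi) = \int_{\mathbb{R}} \frac{e^{i(|x|^a - 2\pi x\xi)}}{(1+|x|)^b}\,dx.
$$
In the region where the stationary point $|x_0|\sim |\xi|^{1/(a-1)}$ satisfies $|x_0|\gtrsim 1$, the amplitude reduces to $|x_0|^{-b}$ and the second derivative $|\phi''(x_0)|\sim |x_0|^{a-2}$. Combining the amplitude with the Gaussian factor $|\phi''(x_0)|^{-1/2}\sim |x_0|^{-(a-2)/2}$ gives
$$
m(\xi) = c\,|\xi|^{-\beta}\,e^{\pm ic'|\xi|^{\alpha}} + O(|\xi|^{-\beta-\epsilon})
$$
for some $\epsilon>0$, with contributions from non-stationary regions handled by iterated integration by parts. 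Differentiating under the integral (which inserts a factor of $x$, contributing $|x_0|\sim |\xi|^{\alpha-1}$ at the stationary point) and repeating the analysis yields $|m'(\xi)|\lesssim |\xi|^{\alpha-\beta-1}$. The defining conditions of $\mathcal{C}(\alpha,\beta)$ then follow routinely: \eqref{hyp0inf} holds by construction of the cutoff, \eqref{hyp1inf} is immediate from the pointwise bound on $m$, and \eqref{hyp2inf} follows from
$$
R^\beta\int_I |m'(\xi)|\,d\xi \;\lesssim\; R^\beta \cdot R^{\alpha-\beta-1}\cdot R^{1-\alpha} \;=\; 1
$$
for any subinterval $I\subset [R,2R]$ of length $R^{1-\alpha}$. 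Theorem~\ref{main1} then delivers the required weighted inequality for the $m_{\mathrm{sing}}$ contribution.

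The regular piece $m_{\mathrm{reg}}$ is the Fourier transform of a rapidly decaying $L^1$ kernel, so the associated convolution operator is pointwise dominated by $Mf$ and yields a contribution bounded by $\int |f|^2 Mw$; this is absorbed into the RHS of \eqref{used} via standard pointwise comparisons between maximal operators. The principal obstacle is the stationary-phase bookkeeping, especially the uniform control of the error term $O(|\xi|^{-\beta-\epsilon})$ and the verification of the sharp derivative bound $|m'(\xi)|\lesssim |\xi|^{\alpha-\beta-1}$ (rather than the looser $|\xi|^{\alpha-\beta}$ one might naively write down by differentiating the phase alone).
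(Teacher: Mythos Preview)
Your overall strategy---verify that (a localisation of) $\widehat{K_{a,b}}$ lies in $\mathcal{C}(\alpha,\beta)$ via stationary phase and then invoke Theorem~\ref{main1}---is exactly the route taken in the paper. However, there is a genuine gap in your decomposition, and it is precisely the point the paper isolates first.

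The kernel $K_{a,b}(x)=e^{i|x|^a}/(1+|x|)^b$ is \emph{not smooth at $x=0$}: near the origin it expands as $1-b|x|+i|x|^a+\cdots$, so in particular it has a corner of type $|x|$. Your stationary-phase argument localises correctly around $|x_0|\sim|\xi|^{1/(a-1)}$, but the claim that ``non-stationary regions are handled by iterated integration by parts'' fails near $x=0$, where integration by parts can be performed only a limited number of times. Concretely, the contribution of a smooth spatial cutoff of $K_{a,b}$ to $\widehat{K_{a,b}}(\xi)$ decays only like $|\xi|^{-2}$ (governed by the $|x|$ corner). For $a>1$ this means your asserted expansion $m(\xi)=c|\xi|^{-\beta}e^{\pm ic'|\xi|^{\alpha}}+O(|\xi|^{-\beta-\epsilon})$ is false whenever $\beta>2$, which occurs throughout the range (e.g.\ $a$ close to $1$, $b$ close to $1$). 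In that regime $|\xi|^\beta|m_{\mathrm{sing}}(\xi)|\to\infty$, so $m_{\mathrm{sing}}\notin\mathcal{C}(\alpha,\beta)$ and Theorem~\ref{main1} does not apply. The analogous obstruction appears for $0<a<1$: there $\beta<0$ and one needs $|m(\xi)|\lesssim|\xi|^{|\beta|}\to 0$ as $\xi\to 0$, but the bounded-$|x|$ contribution tends to the nonzero constant $\int \eta K_{a,b}$.

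The paper repairs this by performing a \emph{spatial} split before the frequency split: write $K_{a,b}=K_{a,b,0}+K_{a,b,\infty}$ with $K_{a,b,0}=\eta K_{a,b}$ for a smooth cutoff $\eta$ near the origin. The compactly supported piece $K_{a,b,0}$ is handled directly by Cauchy--Schwarz and the elementary pointwise bound $|K_{a,b,0}|\ast w\lesssim M^1 w\lesssim \mathcal{M}_{\alpha,\beta}Mw$, entirely avoiding the multiplier class. The remaining kernel $K_{a,b,\infty}$ is smooth, so stationary phase (for which the paper cites \cite{Sj}) now gives the clean Miyachi-type bounds \eqref{airyinf}, and your frequency decomposition $m_0+m_1$ applied to $\widehat{K_{a,b,\infty}}$ goes through exactly as you describe. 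In short, your argument becomes correct once you insert this preliminary spatial localisation.
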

This theorem is optimal in the sense that it allows us to recover \eqref{Sjolinoperators} (and indeed more general $L^p-L^q$ estimates) from Theorem \ref{mainmaximal} via
\eqref{duality}. Notice that if $0<a<1$ then $\alpha:=\frac{a}{a-1}<0$, and so the controlling maximal
operator $\mathcal{M}_{\alpha,\beta}$ corresponds to an \textit{escape} region. Similarly, if $a>1$ then $\alpha>0$ and so
$\mathcal{M}_{\alpha,\beta}$ corresponds to an \textit{approach} region. Theorem \ref{applSj} may of course be viewed as a generalisation (modulo factors of $M$) of the inequality \eqref{SamJon}.

In order to deduce Theorem \ref{applSj} from Theorem \ref{main1} we simply observe that, up to a couple of well-behaved ``error" terms, the multiplier
$\widehat{K}_{a,b}$ belongs to $\mathcal{C}(\alpha,\beta)$. Let us begin by handling the portion of $K_{a,b}$ in a neighbourhood of the origin (where the kernel lacks smoothness). Let $\eta\in C_c^\infty(\mathbb{R})$ be such that $\eta(x)=1$ for $|x|\leq 1$, and write $K_{a,b}=K_{a,b,0}+K_{a,b,\infty}$, where $K_{a,b,0}=\eta K_{a,b}$. Since $K_{a,b,0}$ is rapidly decreasing,
by the Cauchy--Schwarz inequality we have
$$\int_{\mathbb{R}}|K_{a,b,0}*f|^2w\leq\|K_{a,b,0}\|_1\int_{\mathbb{R}}|f|^2|K_{a,b,0}|*w\lesssim
\int_{\mathbb{R}}|f|^2M^{1}w,$$
where
$$M^1w(x):=\sup_{r\geq 1}\frac{1}{2r}\int_{x-r}^{x+r}w.$$
The claimed inequality \eqref{used} for the portion of the kernel $K_{a,b,0}$ now follows from the elementary pointwise bound
$$M^1w\lesssim AM^1w\leq \mathcal{M}_{\alpha,\beta}M^1w\leq \mathcal{M}_{\alpha,\beta}Mw\leq M^6\mathcal{M}_{\alpha,\beta}M^4w,$$
where the averaging operator $A$ is given by
$$Aw(x)=\frac{1}{2}\int_{x-1}^{x+1}w.$$
It thus remains to prove \eqref{used} for the portion $K_{a,b,\infty}$.
In order to force the support hypothesis \eqref{hyp0inf} we introduce a function $\mathcal{\psi}\in C^\infty(\mathbb{R})$ such that $\psi(\xi)=0$ when $|\xi|^\alpha\leq 1$ and $\psi(\xi)=1$ when $|\xi|^\alpha\geq 2$. Writing $m_0=(1-\psi)
\widehat{K}_{a,b,\infty}$ and $m_1=\psi\widehat{K}_{a,b,\infty}$, it suffices to show that
\begin{equation}
\label{nuff}\int_{\mathbb{R}}|T_{m_j}f|^2w\lesssim\int_{\mathbb{R}}|f|^2M^6\mathcal{M}_{\alpha,\beta}M^4w
\end{equation}
for $j=0,1$. A standard stationary phase argument (see \cite{Sj} for explicit details) reveals that
$m_1$ satisfies the Miyachi-type bounds \eqref{airyinf} as $|\xi|^\alpha\rightarrow\infty$. Hence $m_1\in\mathcal{C}(\alpha,\beta)$, yielding \eqref{nuff} for $j=1$ by Theorem \ref{main1}.
The multiplier $m_0$ is less interesting, being the Fourier transform of a rapidly decreasing function (again, see \cite{Sj} for further details). Arguing as we did for the portion $K_{a,b,0}$ establishes \eqref{nuff} for $j=0$, completing the proof.

For a more far-reaching discussion relating to the asymptotics of Fourier transforms of oscillatory kernels, see \cite{S1}, and what Stein refers to as the ``duality of phases".
\subsection{Spatial regularity of solutions of dispersive equations}
Theorem \ref{main1} has an interesting interpretation in the context of spatial regularity of solutions to dispersive equations. For example, applying\footnote{Strictly speaking we are applying Theorem 2 to a portion of the multiplier supported away from the origin, and dealing with the portion near the origin by other (elementary) means. See Section \ref{subsect} for further details.} Theorem 2 to the multiplier $m_{2,\beta}$ given by \eqref{example1} yields
$$
\int_{\mathbb{R}}|e^{i\partial^2}f|^2w\lesssim\int_{\mathbb{R}}|(I-\partial^2)^{\beta/2}f|^2M^6\mathcal{M}_{2,\beta}M^4w$$
for all $\beta\geq 0$. Using the scale-invariant inequality \eqref{weightedquant} with $\lambda=t^{-1/2}$, a similar statement may be made for the operator
$e^{it\partial^2}$; namely
$$
\int_{\mathbb{R}}|e^{it\partial^2}f|^2w\lesssim\int_{\mathbb{R}}|(t^{-1}I-\partial^2)^{\beta/2}f|^2M^6\mathcal{M}_{2,\beta}^{t^{-1/2}}M^4w,$$
with implicit constant independent of $t>0$. It is perhaps more natural to rewrite this as
$$
\int_{\mathbb{R}}|e^{it\partial^2}f|^2w\lesssim\int_{\mathbb{R}}|(I-t\partial^2)^{\beta/2}f|^2M^6\mathfrak{M}_tM^4w,
$$
where
$$
\mathfrak{M}_tw(x):=\sup_{(y,r)\in\Lambda(x)}r^{2\beta}\frac{1}{t^{1/2}r}\int_{y-t^{1/2}r}^{y+t^{1/2}r}w
$$
and
$$
\Lambda(x)=\{(y,r): 0<r\leq 1,\;\;\; |x-y|\leq t^{1/2}/r\},
$$
so that the degeneracy as $t\rightarrow 0$ is more apparent.
The resulting $L^p$ multiplier theorem at $t=1$ (see Corollary \ref{mainmult} in the case $q=p$) is the inequality
$$
\|e^{i\partial^2}f\|_{L^p(\mathbb{R})}\lesssim\|f\|_{W^{\beta,p}}
$$
for $\beta\geq 2|1/2-1/p|$. Here $W^{\beta,p}$ denotes the classical inhomogeneous $L^p$ Sobolev space. This optimal Sobolev inequality, which goes back to Miyachi \cite{Miy}, describes the regularity loss in $L^p(\mathbb{R})$ for a solution to the Schr\"odinger equation with initial data in $L^p(\mathbb{R})$. Naturally this interpretation applies equally well to the wave, Airy and more
general (pseudo) differential dispersive equations. Similar conclusions, for the Schr\"odinger equation at least, may be reached in higher dimensions using the results of Section \ref{higherdimsect}; see also \cite{Miy}.



\section{Weighted inequalities for a lattice square function}\label{LPSection}
In this section we present the forward and reverse weighted Littlewood--Paley square function estimates that underpin our proof of Theorem \ref{main1}. We formulate our results in $\mathbb{R}^n$ in anticipation of higher dimensional applications in Section
\ref{Sectionhigher}.

Let $\Psi\in\mathcal{S}(\mathbb{R}^n)$ be such that $\supp(\widehat{\Psi})\subseteq [-1,1]^n$ and
$$
\sum_{k\in\mathbb{Z}^n}\widehat{\Psi}(\xi-k)=1$$
for all $\xi\in\mathbb{R}^n$. Such a function may of course be constructed by defining $\widehat{\Psi}=\chi_{[-1/2,1/2]^n}*\phi$, for a function $\phi\in C_c^\infty(\mathbb{R}^n)$ of suitably small support and integral $1$.

For each $t\in (0,\infty)^n$ we define the $n\times n$ dilation matrix $\delta(t):=\diag(t_1,\cdots, t_n)$, and the rectangular box
$B(t):=\delta(t)^{-1}[-1,1]^n=[-1/t_1,1/t_1]\times\cdots\times [-1/t_n,1/t_n]$.

Now let $R'\in (0,\infty)^n$ and decompose $\mathbb{R}^n$ into a lattice of rectangles $\{\rho_k\}$ as follows. For each $k\in\mathbb{Z}^n$ let
$$\rho_k=\delta(R')(\{k\}+[-\tfrac{1}{2},\tfrac{1}{2}]^n),$$
making $\rho_k$ the axis-parallel rectangular cell centred at $\delta(R')k=(R_1'k_1,\hdots,R_n'k_n)$ with $j$th side-length $R_j'$. Defining $\Psi_k\in\mathcal{S}(\mathbb{R}^n)$ by
$$\widehat{\Psi}_k(\xi)=\widehat{\Psi}(\delta(R')^{-1}\xi -k),$$
we have
\begin{equation}\label{unity}
\sum_{k\in\mathbb{R}^n}\widehat{\Psi}_k\equiv 1
\end{equation}
and $$\supp(\widehat{\Psi}_k)\subseteq\widetilde{\rho}_k$$ for each $k\in\mathbb{Z}^n$.
Here $\widetilde{\rho}_k$ denotes the concentric double of $\rho_k$.
Finally, let the operator $S_{k}$ be given by $\widehat{S_{k}f}=\widehat{\Psi}_{k}\widehat{f}$.

For the operators $S_{k}$ we have the following essentially standard square function estimate. Very similar results may be found in several places in the literature, including \cite{cordo}, \cite{Rubio} and \cite{BCSV}.
\begin{proposition}\label{classicalLP}
\begin{equation}\label{classineq}\int_{\mathbb{R}^n}\sum_{k}|S_{k}f|^2w\lesssim\int_{\mathbb{R}^n}|f|^2M_Sw\end{equation}
uniformly in $R'$, where $M_S$ denotes the strong maximal function.
\end{proposition}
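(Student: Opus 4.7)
The plan is to prove the pointwise inequality
$$\sum_{k \in \mathbb{Z}^n} |S_k f(x)|^2 \;\lesssim\; (\phi_{R'} * |f|^2)(x),$$
with $\phi_{R'}(u) := (\prod_j R_j')(1 + |\delta(R')u|)^{-N}$ an $L^1$-normalised Schwartz-like bump concentrated on the dual rectangle $\prod_j [-1/R_j',1/R_j']$ and $N>n$. Integrating against $w$ and moving the convolution onto the weight via Fubini reduces everything to the standard domination $\phi_{R'}*w \lesssim M_S w$, which is a routine dyadic-shell decomposition whose constant depends only on $N$, and is therefore uniform in $R'$.

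To establish the pointwise bound, I would first unwind the definition. Inverting $\widehat{\Psi}_k(\xi) = \widehat{\Psi}(\delta(R')^{-1}\xi - k)$ yields
$$\Psi_k(x) = \Bigl(\prod_j R_j'\Bigr)\, e^{2\pi i \delta(R')k \cdot x}\, \Psi(\delta(R')x),$$
and the substitution $z = \delta(R')(x-y)$ in $S_k f(x) = (\Psi_k * f)(x)$ produces
$$S_k f(x) = \int_{\mathbb{R}^n} \Psi(z)\, f(x-\delta(R')^{-1}z)\, e^{2\pi i k \cdot z}\, dz = \widehat{g_x}(-k),$$
where $g_x(z) := \Psi(z) f(x-\delta(R')^{-1}z)$. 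Thus, for each fixed $x$, the quantity we want to control is the $\ell^2$-sum of the integer Fourier coefficients of a single function $g_x$ on $\mathbb{R}^n$.

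The next step is Poisson summation: the periodization $G_x(z) := \sum_{m \in \mathbb{Z}^n} g_x(z+m)$ has Fourier coefficients $\widehat{g_x}(k)$ on $[0,1]^n$, so Parseval on the torus delivers
$$\sum_k |S_k f(x)|^2 = \int_{[0,1]^n} |G_x(z)|^2\, dz.$$
I would then bound $|G_x(z)|^2$ via Cauchy–Schwarz on the lattice sum, splitting the weight $(1+|z+m|)^{-N}$ into two factors $(1+|z+m|)^{-N/2}$ and invoking the Schwartz decay $|\Psi(y)| \lesssim_N (1+|y|)^{-N}$ to keep one factor absolutely summable. This gives
$$|G_x(z)|^2 \;\lesssim\; \sum_{m \in \mathbb{Z}^n} (1+|z+m|)^{-N}\,|f(x-\delta(R')^{-1}(z+m))|^2.$$
Integrating over $z \in [0,1]^n$ unfolds the $m$-sum into an integral over $\mathbb{R}^n$, and the substitution $u = \delta(R')^{-1}(z+m)$ recognises the right-hand side as exactly $(\phi_{R'} * |f|^2)(x)$.

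No step is a genuine obstacle; the proof is essentially a packaging of classical ``local Plancherel" ideas of the kind used in \cite{cordo}, \cite{Rubio} and \cite{BCSV}. The only technical point requiring care is the interplay of the decay parameter $N$ with two convergence requirements: the lattice sum arising from Cauchy–Schwarz, and the dyadic-shell sum in the final comparison $\phi_{R'}*w \lesssim M_S w$. Both are handled simultaneously by choosing $N$ sufficiently large, which is permitted because $\Psi$ is Schwartz. The uniformity in $R'$ is built in because every constant comes from $\Psi$ and $N$ alone, while the $R'$-dependence is cleanly absorbed by the change of variables $u = \delta(R')^{-1}(z+m)$.
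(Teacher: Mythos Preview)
Your proof is correct. The paper does not actually supply a proof of Proposition~\ref{classicalLP}; it simply states the result as ``essentially standard'' and refers the reader to \cite{cordo}, \cite{Rubio} and \cite{BCSV}. Your argument is precisely the classical ``local Plancherel'' computation underlying those references: the change of variables identifies $(S_k f(x))_k$ with the integer Fourier coefficients of $g_x$, Parseval on the torus together with Cauchy--Schwarz on the periodisation yields the pointwise bound $\sum_k|S_kf(x)|^2\lesssim (\phi_{R'}*|f|^2)(x)$, and a dyadic-shell decomposition gives $\phi_{R'}*w\lesssim M_Sw$ since the level sets $\{|\delta(R')u|\leq 2^j\}$ are contained in axis-parallel rectangles of the correct dimensions. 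All constants depend only on $\Psi$ and $N$, so the uniformity in $R'$ is immediate.
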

A reverse weighted inequality, where the function $f$ is controlled by the square function $(\sum_{k}|S_{k}f|^2)^{1/2}$, is rather more subtle, and is the main content of this section.
\begin{theorem}\label{mainLP} Suppose $R\in (0,\infty)^n$ is such that $R_j\geq R_j'$ for each $1\leq j\leq n$, and let $\rho$ be an axis-parallel rectangle in $\mathbb{R}^n$ of $j$th side-length $R_j$. If $\supp(\widehat{f}\;)\subseteq\rho$ then
$$\int_{\mathbb{R}^n}|f|^2w\lesssim\int_{\mathbb{R}^n}\sum_{k}|S_{k}f|^2M_SA_{R,R'}M_Sw,$$
where the operator $A_{R,R'}$ is given by
$$
A_{R,R'}w(x)=\sup_{y\in\{x\}+B(R')}\frac{1}{|B(R)|}\int_{\{y\}+B(R)}w.
$$
\end{theorem}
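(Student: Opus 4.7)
The plan is to prove Theorem~\ref{mainLP} by a Cauchy--Schwarz/absorption argument familiar from classical weighted Littlewood--Paley theory. First, introduce auxiliary Fourier multipliers $\widetilde{S}_k$ with smooth symbols $\widehat{\widetilde{\Psi}}_k$ identically $1$ on $\widetilde{\rho}_k$ and supported on a bounded dilate thereof, so that $\widetilde{S}_k S_k = S_k$. Since $\sum_k \widehat{\Psi}_k \equiv 1$, Parseval's identity yields
\begin{equation*}
\int_{\mathbb{R}^n} |f|^2 w \;=\; \sum_k \int_{\mathbb{R}^n} S_k f \cdot \overline{\widetilde{S}_k(fw)}.
\end{equation*}
Applying Cauchy--Schwarz pointwise in $k$ and then in the integral against an auxiliary positive weight $\Omega$, and controlling the second factor by Proposition~\ref{classicalLP} applied to the family $\{\widetilde{S}_k\}$ (which is also a smooth bounded-overlap partition, adapted to $\{2\widetilde{\rho}_k\}$), one arrives at
\begin{equation*}
\int_{\mathbb{R}^n} |f|^2 w \;\lesssim\; \left(\int_{\mathbb{R}^n} \sum_k |S_k f|^2 \, \Omega\right)^{\!1/2} \left(\int_{\mathbb{R}^n} |f|^2 \, w^2 \, M_S \Omega^{-1}\right)^{\!1/2}.
\end{equation*}

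The plan is then to take $\Omega := M_S A_{R,R'} M_S w$, so that the first factor matches the target right-hand side; the theorem then follows once one establishes an absorption inequality of the form $\int |f|^2 w^2 M_S \Omega^{-1} \lesssim \int |f|^2 w$, by squaring and rearranging. Here the hypothesis $\supp(\widehat{f}) \subseteq \rho$ enters essentially through the smoothing identity $|f|^2 = |f|^2 * \psi$, valid for a Schwartz function $\psi$ with $\widehat{\psi} \equiv 1$ on the support of $\widehat{|f|^2}$ (which is contained in a fixed dilate of $\rho - \rho$) and hence with $\psi$ concentrated at spatial scale $B(R)$. This identity allows the weight $w$ to be effectively replaced by its $B(R)$-smoothed version $\bar{w}$, which is dominated pointwise via the structural inequality $\bar{w}(y) \lesssim \frac{1}{|B(R)|}\int_{y+B(R)} w \leq A_{R,R'} M_S w(y)$, obtained by choosing $z = y$ in the supremum defining $A_{R,R'}$.

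The main obstacle is the absorption step itself. A direct Cauchy--Schwarz would demand the Muckenhoupt-type pointwise condition $w\, M_S \Omega^{-1} \lesssim 1$, which is unavailable without $A_1$-type hypotheses on $w$; the band-limited smoothing relaxes this to the more forgiving condition $\bar{w}\, M_S \Omega^{-1} \lesssim 1$ on the $B(R)$-averaged weight, but even here one must carefully reconcile the $B(R)$-averaging scale natural to the bandlimitedness of $f$ with the $B(R')$-supremum scale built into $A_{R,R'}$; the hypothesis $R_j \geq R_j'$ means these scales are genuinely distinct. The two layers of strong maximal operator in $M_S A_{R,R'} M_S$ are calibrated precisely to resolve this mismatch: the inner $M_S$ smooths $w$ so that $A_{R,R'}$ can bridge both scales, while the outer $M_S$ absorbs the residual $M_S$-losses generated by the forward square function estimate in Proposition~\ref{classicalLP}. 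Making this interplay rigorous is the technical heart of the argument, and is where I expect the bulk of the work to lie.
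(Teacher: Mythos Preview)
Your absorption step has a genuine gap that is not merely technical. With $\Omega = M_S A_{R,R'} M_S w$, the quantity $M_S\Omega^{-1}$ is typically identically $+\infty$: for any compactly supported weight $w$, the function $\Omega$ decays like $\prod_j |x_j|^{-1}$ at infinity, so $\Omega^{-1}$ grows without bound and its strong maximal function is infinite at every point. The second factor in your Cauchy--Schwarz bound is then $+\infty$ and the inequality is vacuous. Band-limited smoothing of $|f|^2$ at scale $B(R)$ does not rescue this, because the averaging lands on $w^{2}M_S\Omega^{-1}$ rather than on $w$ and $M_S\Omega^{-1}$ separately; the reduction you describe to the pointwise condition $\bar w\, M_S\Omega^{-1}\lesssim 1$ is not what the smoothing actually gives, and in any case that pointwise condition also fails for the same reason. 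More fundamentally, any scheme that introduces $\Omega^{-1}$ and then applies a maximal operator to it is asking for $A_\infty$-type regularity of $\Omega$ that a general weight simply does not provide.

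The paper's argument avoids division entirely. It first uses the Fourier support of $f$ to mollify the weight at scale $R$, obtaining $w_1=|\Phi_R|*w$ with $\int|f|^2w\lesssim\int|f|^2w_1$. It then \emph{constructs} a weight $w_3$ that simultaneously (i) dominates $w_1$ pointwise and (ii) has $\widehat{w_3}$ supported in a box of dimensions $\sim R'$. Property (ii) forces genuine orthogonality $\langle S_kf,S_{k'}f\rangle_{L^2(w_3)}=0$ for $|k-k'|\gtrsim 1$, so that $\int|f|^2w_3=\int|\sum_kS_kf|^2w_3\lesssim\sum_k\int|S_kf|^2w_3$ directly, with no Cauchy--Schwarz and no $\Omega^{-1}$. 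The construction is $w_2(x)=\sup_{y\in x+B(R')}w_1(y)$ followed by $w_3=\Theta_{R'}*w_2$ with $\widehat\Theta\geq 0$ compactly supported; the key lemma is that $w_2\lesssim w_3$, proved by a short combinatorial argument on hyperoctants. One then checks $w_3\lesssim M_SA_{R,R'}M_Sw$. The role of the $B(R')$-supremum in $A_{R,R'}$ is precisely to accommodate the intermediate step $w_2$, and the two flanking $M_S$ factors absorb the two mollifications. This direct ``build a good majorant with prescribed Fourier support'' idea is what your outline is missing.
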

\subsubsection*{Remark} As the following proof reveals, Theorem \ref{mainLP} continues to hold if the operators $S_k$ are replaced by the genuine frequency-projection operators defined by $\widehat{S_kf}=\chi_{\rho_k}\widehat{f}$.
\subsection*{Proof of Theorem \ref{mainLP}}
We begin by exploiting the Fourier support hypothesis on $f$ to mollify the weight $w$.
Let $\Phi\in\mathcal{S}(\mathbb{R}^n)$ be such that $\widehat{\Phi}=1$ on $[-1,1]^n$. Observe that if we define $\Phi_{R}\in\mathcal{S}(\mathbb{R}^n)$ by
$\widehat{\Phi}_R(\xi)=\widehat{\Phi}(\delta(R)^{-1}\xi)=\widehat{\Phi}(\xi_1/R_1,\hdots,\xi_n/R_n)$, then $f=f*(M_{\xi_\rho}\Phi_R)$. Here $M_{\xi_\rho}\Phi_R(x)=e^{-2\pi ix\cdot\xi_\rho}\Phi_R(x)$ and $\xi_\rho$ denotes the centre of $\rho$. A standard application of the Cauchy--Schwarz inequality and Fubini's theorem reveals that
\begin{equation}\label{mollification}
\int_{\mathbb{R}^n}|f|^2w=\int_{\mathbb{R}^n}|f*(M_{\xi_\rho}\Phi_R)|^2w\leq \|\Phi_R\|_1\int_{\mathbb{R}^n}|f|^2|\Phi_R|*w\lesssim \int_{\mathbb{R}^n}|f|^2|\Phi_R|*w.
\end{equation}
The final inequality here follows since the functions $\Phi_R$ are normalised in $L^1$.

Now, by \eqref{unity} we have $$f=\sum_{k}S_{k}f.$$
This raises issues of orthogonality for the operators $S_{k}$ on $L^2(w_1)$. Although the weight $w_1=|\Phi_{R}|*w$ is smooth, in order for us to have any (almost) orthogonality we should expect to need an improved smoothness consistent with a mollification by $|\Phi_{R'}|$ rather than $|\Phi_R|$. We thus seek an efficient way of dominating $w_1$ by such an improved weight\footnote{This idea is somewhat reminiscent of the classical fact that if $(Mw^s)^{1/s}<\infty$ a.e. and $s>1$ then $w\leq (Mw^s)^{1/s}\in A_1\subset A_2$; see the discussion following \eqref{hilbertprim}.}. This ingredient, which is based on an argument in \cite{BCSV}, comes in two simple steps. First define the weight function $w_2$ by
$$
w_2(x)=\sup_{y\in\{x\}+B(R')}\; w_1(y).
$$
Certainly $w_2$ dominates $w_1$ pointwise, although $w_2$ will not in general be sufficiently smooth for our purposes. Let $\Theta\in\mathcal{S}(\mathbb{R}^n)$ be a nonnegative function whose Fourier transform is nonnegative and compactly supported, and let $$w_3=\Theta_{R'}*w_2,$$ where $\Theta_{R'}$ is defined by $\widehat{\Theta}_{R'}(\xi)=\widehat{\Theta}(\delta(R')^{-1}\xi)=\widehat{\Theta}(\xi_1/R_1',\hdots,\xi_n/R_n')$. By construction $w_3$ has Fourier support in $\{\xi:|\xi_j|\lesssim R_j', \; 1\leq j\leq n\}$, and so by Parseval's theorem we have the desired almost orthogonality:
\begin{eqnarray}\label{orth}
\langle S_{k}f,S_{k'}f\rangle_{L^2(w_3)}=0\;\;\mbox{ if }\;\; |k-k'|\gtrsim 1.
\end{eqnarray}
Despite its improved smoothness, this new weight $w_3$ continues to dominate $w_1$.
\begin{lemma}\label{stilldom}
$w_2\lesssim w_3$.
\end{lemma}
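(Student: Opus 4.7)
The plan is to show that $w_2(x)\lesssim w_3(x)$ pointwise, by producing a positive lower bound for $\Theta_{R'}$ on a slight enlargement of $B(R')$ and exploiting the symmetry of this cell. Since $\Theta$ is at our disposal (subject to being Schwartz, nonnegative, and having a nonnegative, compactly supported Fourier transform), we may replace $\Theta$ by $\Theta(\cdot/M)$ for $M$ sufficiently large without violating any of these properties; the Fourier transform merely has its support dilated while remaining compact and nonnegative. In this way we may assume $\Theta(u)\geq c>0$ for all $u\in[-2,2]^n$. Consequently
\[\Theta_{R'}(u)=\Bigl(\prod_{j=1}^n R_j'\Bigr)\Theta(\delta(R')u)\gtrsim\frac{1}{|B(R')|}\quad\text{for all }u\in 2B(R').\]

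Now fix $x\in\mathbb{R}^n$ and any $\lambda<w_2(x)$. By the definition of $w_2$ there exists $\zeta\in\{x\}+B(R')$ with $w_1(\zeta)>\lambda$. The cell $B(R')$ is symmetric about the origin, so the two conditions $\zeta\in\{y\}+B(R')$ and $y\in\{\zeta\}+B(R')$ are equivalent; thus for every $y\in\{\zeta\}+B(R')$ the supremum defining $w_2(y)$ includes $\zeta$, yielding $w_2(y)\geq w_1(\zeta)>\lambda$. The triangle inequality also gives $\{\zeta\}+B(R')\subseteq\{x\}+2B(R')$, so $x-y\in 2B(R')$ throughout this range, and the lower bound above for $\Theta_{R'}$ applies. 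Restricting the defining integral for $w_3(x)$ to this set yields
\[w_3(x)\geq\int_{\{\zeta\}+B(R')}\Theta_{R'}(x-y)\,w_2(y)\,dy\gtrsim\frac{\lambda}{|B(R')|}\cdot|B(R')|=\lambda,\]
and letting $\lambda\uparrow w_2(x)$ completes the proof.

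I do not anticipate any serious obstacle here: the argument is essentially a pointwise, geometric one, relying on the fact that $w_2$ is a supremum over a \emph{symmetric} cell at the same scale as the averaging kernel in $w_3$, so the point $\zeta$ witnessing $w_2(x)$ also witnesses a comparable lower bound for $w_2(y)$ at every $y$ in a neighbourhood. The only mild technical point is securing a uniform positive lower bound for $\Theta$ on the slightly enlarged region $[-2,2]^n$ rather than merely in a neighbourhood of the origin, and this is handled by the rescaling in the first paragraph.
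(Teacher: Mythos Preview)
Your proof is correct. Both your argument and the paper's rest on the same idea---rescale $\Theta$ to be bounded below on a suitable cube, then exploit the symmetry of $B(R')$ in the definition of $w_2$---but the executions differ slightly. The paper assumes only $\Theta\gtrsim 1$ on $[-1,1]^n$ and uses a short contradiction argument to locate a \emph{hyperoctant} of $B(R')$ on which $w_2\geq w_2(0)$, which is enough to bound the average. You instead take a near-maximiser $\zeta$ for $w_2(x)$ and observe directly that $w_2>\lambda$ on the full translated box $\{\zeta\}+B(R')$; since this box need only lie in $\{x\}+2B(R')$, you require positivity of $\Theta$ on the doubled cube $[-2,2]^n$. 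Your route is arguably more transparent and avoids the hyperoctant covering step, at the negligible cost of a slightly larger rescaling of $\Theta$.
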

\begin{proof}
By dilating $\Theta$ by an absolute constant if necessary, we may assume that $\Theta\gtrsim 1$ on $[-1,1]^n$.
Consequently,
$$
w_3(0)\gtrsim \frac{1}{|B(R')|}\int_{B(R')}w_2(x)dx.
$$
Now let $B_1,B_2,\hdots, B_{2^n}$ be the intersections of $B(R')$ with the $2^n$ coordinate hyperoctants of $\mathbb{R}^n$. It will suffice to show that there exists $\ell\in \{1,2,\hdots,2^n\}$ such that $w_2(x)\geq w_2(0)$ for all $x\in B_\ell$. To see this we suppose, for a contradiction, that there exist $x_\ell\in B_\ell$ such that $w_2(x_\ell)<w_2(0)$ for each $1\leq\ell\leq 2^n$. Thus, by the definition of $w_2$ we have
$$\sup_{x\in \{x_\ell\}+B(R')}w_1(x)<w_2(0)\;\mbox{ for }\;1\leq \ell\leq 2^n.$$ However, since $$B(R')\subseteq\bigcup_{\ell=1}^{2^n}
(\{x_\ell\}+B(R')),$$
$\sup_{x\in B(R')}w_1(x)<w_2(0)$, contradicting the definition of $w_2(0)$.
\end{proof}
Combining \eqref{mollification}, Lemma \ref{stilldom} and the orthogonality property \eqref{orth} we obtain
\begin{equation}\label{mostdone}
\int_{\mathbb{R}^n}|f|^2w\lesssim \int_{\mathbb{R}^n}\sum_{k}|S_{k}f|^2 w_3.
\end{equation}
In order to complete the proof of Theorem \ref{mainLP} it remains to show that $w_3(x)\lesssim M_SA_{R,R'}M_Sw(x)$ uniformly in $x$ and $R, R'$. Since $w_3(x)\lesssim M_Sw_2(x)$ it suffices to show that $w_2(x)\lesssim A_{R,R'}M_Sw(x)$. Further, by translation-invariance, it is enough to deal with the case $x=0$. To see this we define the maximal operator $M_S^{(R)}$ by
$$M_S^{(R)}w(y)=\sup_{r\geq 1}\frac{1}{|rB(R)|}\int_{\{y\}+rB(R)}w.$$ Notice that $M_S^{(R)}w\leq M_Sw$. Using the rapid decay of $\Phi$ and elementary considerations we have
$$w_1(y)=|\Phi_{R}|*w(y)\lesssim M_S^{(R)}w(y)\lesssim \frac{1}{|B(R)|}\int_{\{y\}+B(R)}M_S^{(R)}w,$$ and so
$$
w_2(0)\lesssim\sup_{y\in B(R')}\frac{1}{|B(R)|}\int_{\{y\}+B(R)}M_Sw= A_{R,R'}M_Sw(0)$$ uniformly in $R, R'$, as required.


\section{The proof of Theorem \ref{main1}}\label{proofmain1}
The proof we present combines the essential ingredients of the standard proof of the Marcinkiewicz multiplier theorem (see for example \cite{S0} or \cite{Duo}) and the square function estimates from Section \ref{LPSection}.

By standard weighted Littlewood--Paley theory (see \cite{BH} for further details) it suffices to prove that
\begin{equation}\label{enough}
\int_{\mathbb{R}}|T_mf|^2w\lesssim\int_{\mathbb{R}}|f|^2M^5\mathcal{M}_{\alpha,\beta}M w,
\end{equation}
holds for functions $f$ with Fourier support in the dyadic interval $\pm[R,2R]$, with bounds uniform in $R^\alpha\geq 1$.

Suppose that $\supp(\widehat{f})\subseteq \pm[R,2R]$ for some $R^\alpha\geq 1$. We begin by applying Theorem \ref{mainLP} with $n=1$, $R'=R^{-\alpha}R$ and $\rho=\pm[R,2R]$. For each $k\in\mathbb{Z}$ let $\rho_k$, $\widetilde{\rho}_k$, $\Psi_{k}$ and $S_{k}$ be as in Section \ref{LPSection}. By Theorem \ref{mainLP} we have
\begin{equation}\label{usingmainLP}
\int_{\mathbb{R}}|T_mf|^2w\lesssim\int_{\mathbb{R}}\sum_{k}|S_{k}T_mf|^2MA_{R,R'}Mw
\end{equation}
uniformly in $R^\alpha\geq 1$. Of course the case $R=1$ (as with the case $\alpha=0$) is somewhat degenerate here, although we note that the conclusion \eqref{usingmainLP} does retain some content.

Next we invoke the standard representation formula
\begin{equation}\label{keyobs}
S_{k}T_mf(x)=m(a_{k})S_{k}f(x)+\int_{\widetilde{\rho}_k}U_\xi S_{k}f(x)m'(\xi)d\xi,
\end{equation}
where $a_{k}=\inf\:\widetilde{\rho}_k$ and $U_\xi$ is defined by \begin{equation}\label{uu}\widehat{U_\xi f}=\chi_{[\xi,\infty)}\widehat{f}.\end{equation}
In order to see \eqref{keyobs}, which is a minor variant of \eqref{marcrep}, we use the Fourier inversion formula to write
\begin{eqnarray*}
\begin{aligned}
S_{k}T_mf(x)&=\int_{\widetilde{\rho}_k}e^{ix\xi}\widehat{\Psi}_{k}(\xi)m(\xi)\widehat{f}(\xi)d\xi\\
&=-\int_{\widetilde{\rho}_k}\frac{\partial}{\partial\xi}\Bigl(\int_{\xi}^\infty\widehat{\Psi}_{k}(t)\widehat{f}(t)e^{ixt}dt\Bigr)m(\xi)d\xi\\
&=m(a_{k})S_{k}f(x)+\int_{\widetilde{\rho}_k}\Bigl(\int_{\mathbb{R}}\chi_{[\xi,\infty)}(t)\widehat{\Psi}_{k}(t)\widehat{f}(t)e^{ixt}dt\Bigr)m'(\xi)d\xi\\
&=m(a_{k})S_{k}f(x)+\int_{\widetilde{\rho}_k}U_\xi S_{k}f(x)m'(\xi)d\xi.
\end{aligned}
\end{eqnarray*}
Applying Minkowski's inequality to \eqref{keyobs} we obtain
\begin{eqnarray*}
\begin{aligned}
\Bigl(\int_{\mathbb{R}}|S_kT_mf|^2MA_{R,R'}Mw\Bigr)^{1/2}&\leq |m(a_k)|\Bigl(\int_{\mathbb{R}}|S_kf|^2MA_{R,R'}Mw\Bigr)^{1/2}\\
&+\int_{\widetilde{\rho}_k}\Bigl(\int_{\mathbb{R}}|U_\xi S_kf|^2MA_{R,R'}Mw\Bigr)^{1/2}|m'(\xi)|d\xi.
\end{aligned}
\end{eqnarray*}
Since $U_\xi=\frac{1}{2}(I+iM_{-\xi}HM_\xi)$ where $M_\xi f(x):=e^{-2\pi ix\xi}f(x)$ and $H$ is the Hilbert transform, an application of \eqref{hilbert} yields
$$
\int_{\mathbb{R}}|U_\xi S_kf|^2MA_{R,R'}Mw\lesssim \int_{\mathbb{R}}|S_kf|^2M^4A_{R,R'}Mw
$$
uniformly in $\xi$, $k$ and $R$. Using this along with the hypotheses \eqref{hyp1inf} and \eqref{hyp2inf} yields
$$
\int_{\mathbb{R}}|S_k T_m f|^2MA_{R,R'}Mw\lesssim R^{-2\beta}\int_{\mathbb{R}}|S_kf|^2M^4A_{R,R'}Mw
$$
uniformly in $k$ and $R$. Here we have used the fact that $|a_k|\sim R$. Thus by \eqref{usingmainLP} and Proposition \ref{classicalLP} we have
$$
\int_{\mathbb{R}}|T_m f|^2w\lesssim R^{-2\beta}\int_{\mathbb{R}}|f|^2M^5A_{R,R'}Mw
$$
uniformly in $R^\alpha\geq 1$.
Inequality \eqref{enough} now follows from the elementary observation that $$R^{-2\beta}A_{R,R^{-\alpha}R}w(x)\lesssim \mathcal{M}_{\alpha,\beta}w(x)$$ uniformly in $x$ and $R^\alpha\geq 1$.

\section{Extensions to higher dimensions}\label{higherdimsect}\label{Sectionhigher}
Theorem \ref{main1} has a natural generalisation to higher dimensions. It should be pointed out that this generalisation, being of Marcinkiewicz type in formulation, is not motivated by multipliers of the form \eqref{airyinf}, but rather by tensor products of such one-dimensional multipliers.
For the sake of simplicity we confine our attention to two dimensions. Just as with the classical Marcinkiewicz multiplier theorem, this is already typical of the general situation.

For $\alpha,\beta\in\mathbb{R}^2$ let $\mathcal{C}(\alpha,\beta)$ denote the class of functions $m:\mathbb{R}^2\rightarrow\mathbb{C}$
for which
\begin{equation}\label{hyp20}
\supp(m)\subseteq\{\xi\in\mathbb{R}^2: |\xi_1|^{\alpha_1}\geq 1,\;\;|\xi_2|^{\alpha_2}\geq 1\},
\end{equation}
\begin{equation}\label{hyp21}
\sup_{\xi_2}\sup_{\xi_1}\;|\xi_2|^{\beta_2}|\xi_1|^{\beta_1}|m(\xi_1,\xi_2)|<\infty,
\end{equation}
\begin{equation}\label{hyp22}
\sup_{\xi_2}|\xi_2|^{\beta_2}\Biggl\{\sup_{R_1^{\alpha_1}\geq 1}\sup_{\substack{I_1\subseteq [R_1,2R_1]\\ \ell(I_1)=R_1^{-\alpha_1}R_1}}
R_1^{\beta_1}\int_{\pm I_1}\Bigl|\frac{\partial m}{\partial \xi_1}\Bigr|d\xi_1\Biggr\}<\infty,
\end{equation}
\begin{equation}\label{hyp23}
\sup_{\xi_1}|\xi_1|^{\beta_1}\Biggl\{\sup_{R_2^{\alpha_2}\geq 1}\sup_{\substack{I_2\subseteq [R_2,2R_2]\\ \ell(I_2)=R_2^{-\alpha_2}R_2}}
R_2^{\beta_2}\int_{\pm I_2}\Bigl|\frac{\partial m}{\partial \xi_2}\Bigr|d\xi_2\Biggr\}<\infty,
\end{equation}
and
\begin{equation}\label{hyp24}
\sup_{R_2^{\alpha_2}\geq 1}\sup_{\substack{I_2\subseteq [R_2,2R_2]\\ \ell(I_2)=R_2^{-\alpha_2}R_2}}\sup_{R_1^{\alpha_1}\geq 1}\sup_{\substack{I_1\subseteq [R_1,2R_1]\\ \ell(I_1)=R_1^{-\alpha_1}R_1}}R_2^{\beta_2}R_1^{\beta_1}\int_{\pm I_2}\int_{\pm I_1}\Bigl|\frac{\partial^2 m}{\partial \xi_1 \partial \xi_2}\Bigr|d\xi_1d\xi_2<\infty.
\end{equation}
Although these conditions might appear rather complicated, it is straightforward to verify that the tensor product $\mathcal{C}(\alpha_1,\beta_1)\otimes\mathcal{C}(\alpha_2,\beta_2)\subset\mathcal{C}(\alpha,\beta)$, and that $\mathcal{C}(0,0)$ is precisely the classical Marcinkiewicz multipliers on $\mathbb{R}^2$.
\begin{theorem}\label{main3}
\label{mainhigher} If $m\in\mathcal{C}(\alpha,\beta)$ then
\begin{equation}\label{maininequalityhigher}
\int_{\mathbb{R}^2}|T_mf|^2w\lesssim\int_{\mathbb{R}^2}|f|^2M_S^{9}\mathcal{M}_{\alpha,\beta}M_S^{7} w,
\end{equation}
where
\begin{equation}\label{maxdef}
\mathcal{M}_{\alpha,\beta}w(x)=\sup_{(r_1,y_1)\in \Gamma_{\alpha_1}(x_1)}\;\sup_{(r_2,y_2)\in \Gamma_{\alpha_2}(x_2)}\frac{r_1^{2\beta_1}}{r_1}\frac{r_2^{2\beta_2}}{r_2}\int_{|y_1-z_1|\leq r_1}\int_{|y_2-z_2|\leq r_2}w(z)dz
\end{equation}
and $M_S$ denotes the strong maximal function.
\end{theorem}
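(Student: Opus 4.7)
The plan is to imitate the one-dimensional argument of Section \ref{proofmain1}, replacing the single integration-by-parts identity \eqref{keyobs} with an iterated two-variable version, and using the $n=2$ instance of Theorem \ref{mainLP} together with two applications of Wilson's inequality \eqref{hilbert} (one per coordinate direction). By two-parameter weighted Littlewood--Paley theory (see for instance \cite{BH}, \cite{W2}) it suffices to establish \eqref{maininequalityhigher} for $f$ with
$\supp(\widehat{f}\;)\subseteq\pm[R_1,2R_1]\times\pm[R_2,2R_2]$ with bounds uniform in $R_1^{\alpha_1},R_2^{\alpha_2}\geq 1$. For such $f$, I apply Theorem \ref{mainLP} in the plane with $R=(R_1,R_2)$ and $R'=(R_1^{-\alpha_1}R_1,\, R_2^{-\alpha_2}R_2)$; denote the resulting frequency-projection operators by $S_k$, $k\in\mathbb{Z}^2$, with $\supp(\widehat{\Psi}_k)\subseteq\widetilde{\rho}_k$, a double of the lattice cell of sidelengths $R_j^{-\alpha_j}R_j$.

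The key structural step is a two-dimensional analogue of \eqref{keyobs}: integrating by parts once in $\xi_1$ and once in $\xi_2$ in the Fourier inversion formula for $S_k T_m f$, with the base-point chosen to be the corner $a_k = (a_k^{(1)},a_k^{(2)})$ of $\widetilde{\rho}_k$, one obtains the representation
\begin{eqnarray*}
S_k T_m f(x) &=& m(a_k)S_k f(x)
+\int_{\widetilde{\rho}_k^{(1)}}U_{\xi_1}^{(1)}S_kf(x)\,\partial_1 m(\xi_1,a_k^{(2)})\,d\xi_1 \\
&&+\int_{\widetilde{\rho}_k^{(2)}}U_{\xi_2}^{(2)}S_kf(x)\,\partial_2 m(a_k^{(1)},\xi_2)\,d\xi_2 \\
&&+\int_{\widetilde{\rho}_k^{(2)}}\int_{\widetilde{\rho}_k^{(1)}} U_{\xi_1}^{(1)}U_{\xi_2}^{(2)}S_k f(x)\,\partial_1\partial_2 m(\xi_1,\xi_2)\,d\xi_1 d\xi_2,
\end{eqnarray*}
where $U_{\xi_j}^{(j)}$ denotes the operator \eqref{uu} applied in the $\xi_j$ variable. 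The four terms correspond precisely to the four hypotheses \eqref{hyp21}--\eqref{hyp24}.

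Next I insert this into the weighted square-function bound from Theorem \ref{mainLP}, apply Minkowski's inequality in $L^2(M_S A_{R,R'} M_S w)$, and in each of the three integrated terms use that $U_{\xi_j}^{(j)}=\tfrac{1}{2}(I+iM_{-\xi_j}HM_{\xi_j})$ acts in a single variable, so by Wilson's inequality \eqref{hilbert} applied slice-wise (equivalently by its two-parameter strong analogue),
\[
\int_{\mathbb{R}^2}\bigl|U_{\xi_1}^{(1)}U_{\xi_2}^{(2)}S_kf\bigr|^2 M_S A_{R,R'}M_S w
\lesssim \int_{\mathbb{R}^2}|S_kf|^2 M_S^{7} A_{R,R'} M_S w
\]
uniformly in $\xi$ and $k$, with analogous (and cheaper) bounds for the other two integrated terms and for the constant term. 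Combining the four contributions with \eqref{hyp21}--\eqref{hyp24} and using $|a_k^{(j)}|\sim R_j$ produces a total factor of $R_1^{-2\beta_1}R_2^{-2\beta_2}$, so that summing over $k$ and invoking the classical square-function bound (Proposition \ref{classicalLP}) yields
\[
\int_{\mathbb{R}^2}|T_m f|^2 w \;\lesssim\; R_1^{-2\beta_1}R_2^{-2\beta_2}\int_{\mathbb{R}^2}|f|^2 M_S^{8}A_{R,R'}M_S w.
\]
The proof is then finished by the elementary pointwise domination
\[
R_1^{-2\beta_1}R_2^{-2\beta_2}\,A_{R,R'}w(x)\;\lesssim\;\mathcal{M}_{\alpha,\beta}w(x),
\]
uniform in $x$ and $R_j^{\alpha_j}\geq 1$, which is the two-dimensional analogue of the final step of Section \ref{proofmain1} (applied coordinatewise). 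The main technical obstacle is the verification of the two-parameter representation formula above and the bookkeeping to show that each of its four pieces is controlled by the appropriate one among the hypotheses \eqref{hyp21}--\eqref{hyp24}; once these are in place, the rest of the argument is a routine two-parameter adaptation of the one-dimensional proof. In particular, the tangential/escape character of $\Gamma_{\alpha_j}$ plays no new role, as it is encoded entirely in the choice of sidelengths $R_j'=R_j^{-\alpha_j}R_j$ used to define the lattice square function.
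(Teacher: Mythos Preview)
Your proposal is correct and follows essentially the same route as the paper's proof: the reduction via two-parameter weighted Littlewood--Paley theory, the application of Theorem \ref{mainLP} with $R'=(R_1^{1-\alpha_1},R_2^{1-\alpha_2})$, the four-term representation formula \eqref{keyobs2}, the termwise control using \eqref{hyp21}--\eqref{hyp24} together with iterated applications of \eqref{hilbert}, and the final pointwise domination by $\mathcal{M}_{\alpha,\beta}$ all match the paper. The only (harmless) imprecision is that you say it suffices to prove \eqref{maininequalityhigher} itself for dyadically localised $f$, whereas one actually proves the slightly stronger bound with $M_S^{8}\mathcal{M}_{\alpha,\beta}M_S$ on the right and then absorbs the extra $M_S$ factors introduced by the Littlewood--Paley reassembly; your argument in fact establishes this stronger localised estimate.
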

\subsection*{The proof of Theorem \ref{main3}}
The proof we present is very similar to the one-dimensional case. By standard weighted Littlewood--Paley theory (again, see \cite{BH} for details) it suffices to prove that
\begin{equation}\label{enoughhigher}
\int_{\mathbb{R}^2}|T_mf|^2w\lesssim\int_{\mathbb{R}^2}|f|^2M_S^{8}\mathcal{M}_{\alpha,\beta}M_S w,
\end{equation}
holds for functions $f$ with Fourier support in $(\pm[R_1,2R_1])\times(\pm [R_2,2R_2])$, with bounds uniform in $R_1^{\alpha_1},
R_2^{\alpha_2}\geq 1$.

Assuming such a restriction we apply Theorem \ref{mainLP} with $n=2$, $R'=(R_1^{-\alpha_1}R_1,R_2^{-\alpha_2}R_2)$ and $\rho=(\pm[R_1,2R_1])\times(\pm [R_2,2R_2])$. For each $k\in\mathbb{Z}^2$ let $\rho_k$, $\widetilde{\rho}_k$, $\Psi_k$ and $S_k$ be as in Section \ref{LPSection}. By Theorem \ref{mainLP} we have
\begin{equation}\label{usingmainLPhigher}
\int_{\mathbb{R}^2}|T_mf|^2w\lesssim\int_{\mathbb{R}^2}\sum_{k\in\mathbb{Z}^2}|S_kT_mf|^2M_SA_{R,R'}M_Sw
\end{equation}
uniformly in $R$.

In what follows $\pi_1,\pi_2:\mathbb{R}^2\rightarrow\mathbb{R}$ denote the coordinate projections $\pi_1x=x_1$ and $\pi_2x=x_2$, and for each $k$ we define $a_k\in\mathbb{R}^2$ by
$a_k=(\inf \pi_1\widetilde{\rho}_k, \inf \pi_2\widetilde{\rho}_k)$, making $a_k$ the bottom left vertex of the axis-parallel rectangle $\widetilde{\rho}_k$.

Now, taking our cue again from the standard proof of the classical Marcinkiewicz multiplier theorem we write
\begin{eqnarray}\label{keyobs2}
\begin{aligned}
S_{k}T_mf(x)&=m(a_k)S_{k}f(x)\\&+\int_{\pi_1\widetilde{\rho}_k}U_{\xi_1}^{(1)}S_{k}f(x)\frac{\partial m}{\partial\xi_1}(\xi_1,\pi_2a_k)d\xi_1\\&+
\int_{\pi_2\widetilde{\rho}_k}U_{\xi_2}^{(2)}S_{k}f(x)\frac{\partial m}{\partial\xi_2}(\pi_1a_k,\xi_2)d\xi_2\\&+
\int_{\widetilde{\rho}_k}U_{\xi_2}^{(2)}U_{\xi_1}^{(1)}S_{k}f(x)
\frac{\partial^2 m}{\partial\xi_1\partial\xi_2}(\xi_1,\xi_2)d\xi_1 d\xi_2,
\end{aligned}
\end{eqnarray}
where $U_{\xi_j}^{(j)}$ denotes the operator $U_{\xi_j}$, defined in \eqref{uu}, acting in the $j$th variable.
Applying Minkowski's inequality we obtain
\begin{eqnarray*}
\begin{aligned}
\Bigl(\int_{\mathbb{R}^2}&|S_kT_mf|^2M_SA_{R,R'}M_Sw\Bigr)^{1/2}\\&\leq |m(a_k)|\Bigl(\int_{\mathbb{R}^2}|S_kf|^2M_SA_{R,R'}M_Sw\Bigr)^{1/2}\\
&+\int_{\pi_1\widetilde{\rho}_k}\Bigl(\int_{\mathbb{R}^2}|U_{\xi_1}^{(1)}S_kf|^2M_SA_{R,R'}M_Sw\Bigr)^{1/2}\Bigl|\frac{\partial m}{\partial{\xi_1}}(\xi_1,\pi_2a_k)\Bigr|d\xi_1\\
&+\int_{\pi_2\widetilde{\rho}_k}\Bigl(\int_{\mathbb{R}^2}|U_{\xi_2}^{(2)}S_kf|^2M_SA_{R,R'}M_Sw\Bigr)^{1/2}\Bigl|\frac{\partial m}{\partial{\xi_2}}(\pi_1a_k,\xi_2)\Bigr|d\xi_2\\
&+\int_{\widetilde{\rho}_k}\Bigl(\int_{\mathbb{R}^2}|U_{\xi_2}^{(2)} U_{\xi_1}^{(1)}S_kf|^2M_SA_{R,R'}M_Sw\Bigr)^{1/2}\Bigl|\frac{\partial^2 m}{\partial{\xi_1}\partial{\xi_2}}(\xi_1,\xi_2)\Bigr|d\xi\\
&=I+II+III+IV.
\end{aligned}
\end{eqnarray*}
For $I$ we use the facts that $|\pi_1a_k|\sim R_1$ and $|\pi_2a_k|\sim R_2$, along with \eqref{hyp21} to obtain
$$I\lesssim R_2^{-\beta_2}R_1^{-\beta_1}\Bigl(\int_{\mathbb{R}^2}|S_kf|^2M_SA_{R,R'}M_Sw\Bigr)^{1/2}$$ uniformly in $k$.
For $II$, following the proof of Theorem \ref{main1}, we apply \eqref{hilbert} in the first variable to obtain
$$II\lesssim \int_{\pi_1\widetilde{\rho}_k}\Bigl(\int_{\mathbb{R}^2}|S_kf|^2M_S^4A_{R,R'}M_Sw\Bigr)^{1/2}\Bigl|\frac{\partial m}{\partial{\xi_1}}(\xi_1,\pi_2a_k)\Bigr|d\xi_1,$$
which by \eqref{hyp22} yields
$$
II\lesssim R_2^{-\beta_2}R_1^{-\beta_1}\Bigl(\int_{\mathbb{R}^2}|S_kf|^2M_S^4A_{R,R'}M_Sw\Bigr)^{1/2}$$
uniformly in $k$.
By \eqref{hyp23} and symmetry it follows that $III$ satisfies the same bound. The final term $IV$ is potentially the most interesting as it involves using a weighted bound on the double Hilbert transform. By a two-fold application of \eqref{hilbert}, followed by \eqref{hyp24}, we obtain
$$IV\lesssim R_2^{-\beta_2}R_1^{-\beta_1}\Bigl(\int_{\mathbb{R}^2}|S_kf|^2M_S^7A_{R,R'}M_Sw\Bigr)^{1/2}.$$
Thus by \eqref{usingmainLPhigher} and Proposition \ref{classicalLP} we have
$$\int_{\mathbb{R}^2}|T_mf|^2w\lesssim R_2^{-2\beta_2}R_1^{-2\beta_1}\int_{\mathbb{R}^2}|S_kf|^2M_S^8A_{R,R'}M_Sw$$
uniformly in $R_1^{\alpha_1}, R_2^{\alpha_2}\geq 1$. Inequality \eqref{enoughhigher} now follows on observing that
$$
R_2^{-2\beta_2}R_1^{-2\beta_1}A_{R,R'}w(x)\lesssim\mathcal{M}_{\alpha,\beta}w(x)$$
uniformly in $x$ and $R_1^{\alpha_1},R_2^{\alpha_2}\geq 1$.
\subsubsection*{Remarks}
The above arguments raise certain basic questions about weighted inequalities for various multiparameter operators in harmonic analysis. For instance, for which powers $k\in\mathbb{N}$ do we have
$$
\int_{\mathbb{R}^n}|Tf|^2w\lesssim\int_{\mathbb{R}^n}|f|^2M_S^kw
$$
for classical product Calder\'on--Zygmund operators $T$ on $\mathbb{R}^n$ with $n\geq 2$? As we have seen, crudely applying the one dimensional result of Wilson \cite{W} separately in each variable allows us to take $k=3n$. Reducing this power would of course lead to a reduction in the number of factors of $M_S$ in the statement of Theorem \ref{mainhigher}.

As we have already discussed, since Theorem \ref{mainhigher} involves Marcinkiewicz-type hypotheses it really belongs to the ``multiparameter" theory of multipliers. It is conceivable that a variant may be obtained involving a H\"ormander-type hypothesis on sub-lacunary annuli in $\mathbb{R}^n$; that is, involving hypotheses on quantities of the form
$$
\int_{R_j\leq |\xi|<R_{j+1}}\Bigl|\Bigl(\frac{\partial}{\partial\xi}\Bigr)^\gamma m(\xi)\Bigr|^2d\xi
$$
for certain sub-lacunary sequences $(R_j)$ and multi-indices $\gamma$. A very general result of this type (which might permit the
radii $(R_j)$ to accumulate away from zero) is likely to be difficult as it would naturally apply to the Bochner--Riesz multipliers. There are of course many other conditions that one might impose, from the above all the way down to the higher dimensional analogue of the Miyachi condition \eqref{airyinf} in \cite{Miy} and \cite{Miy0}; see also \cite{Carb83}.

\section{Proof of Theorem \ref{mainmaximal}}\label{boundingmax} In this section we give a proof of Theorem \ref{mainmaximal}.
Our argument is a generalisation of those in \cite{BCSV} and \cite{BH}; see also \cite{NS}.
As the case $\alpha=0$ reduces to the $L^p-L^q$ boundedness of the classical fractional Hardy--Littlewood maximal function, we may assume that $\alpha\not=0$.

The claimed necessity of the conditions \eqref{abpqpos}, \eqref{abpqzero} and \eqref{abpqneg} follows from testing the putative $L^p-L^q$ bound for $\mathcal{M}_{\alpha,\beta}$ on the characteristic function $f_{\nu}=\chi_{[-\nu,\nu]}$. The necessary conditions follow by taking limits as both $\nu\rightarrow 0$ and $\nu\rightarrow\infty$. We leave these elementary calculations to the reader.

It will suffice to establish the $L^p-L^q$ boundedness of $\mathcal{M}_{\alpha,\beta}$ for exponents $1<p\leq q\leq\infty$ on the sharp line
\begin{equation}\label{abpq}
\beta=\frac{\alpha}{2q}+\frac{1}{2}\Bigl(\frac{1}{p}-\frac{1}{q}\Bigr).
\end{equation}
As our proof of Theorem \ref{mainmaximal} rests on a Hardy space estimate, it is necessary to regularise the averaging in the definition of $\mathcal{M}_{\alpha,\beta}$. To this end let
$P$ be a nonnegative compactly supported bump function which is positive on $[-1,1]$, let $P_r(x)=r^{-1}P(x/r)$, and define the maximal operator $\widetilde{\mathcal{M}}_{\alpha,\beta}$ by
$$\widetilde{\mathcal{M}}_{\alpha,\beta}w(x)=\sup_{(y,r)\in\Gamma_\alpha(x)}r^{2\beta}|P_r*w(y)|.$$ Since $\mathcal{M}_{\alpha,\beta}w\lesssim\widetilde{\mathcal{M}}_{\alpha,\beta}w$ pointwise uniformly, it suffices to prove that $\widetilde{\mathcal{M}}_{\alpha,\beta}$ is bounded from $L^p(\mathbb{R})$ to $L^q(\mathbb{R})$ when \eqref{abpq} holds. Since $\widetilde{\mathcal{M}}_{\alpha,0}$ is bounded on $L^\infty(\mathbb{R})$, and $\widetilde{\mathcal{M}}_{\alpha,1/2}$ is bounded from $L^1(\mathbb{R})$ to $L^\infty(\mathbb{R})$, by analytic interpolation (see \cite{S0}) it suffices to prove that $\widetilde{\mathcal{M}}_{\alpha,\alpha/2}$ is bounded from $H^1(\mathbb{R})$ to $L^1(\mathbb{R})$. We establish this by showing that
\begin{equation}\label{atombound}\|\widetilde{\mathcal{M}}_{\alpha,\alpha/2}a\|_1\lesssim 1
\end{equation}
uniformly in $H^1$-atoms $a$. By translation-invariance we may suppose that the support interval $I$ of $a$ is centred at the origin.
Our estimates will be based on the standard and elementary pointwise bound
\[ |P_r*a(x)| \lesssim \left\{ \begin{array}{lll}
         1/|I| & \mbox{if $r\leq |I|$, $|x|\leq 5|I|/2$};\\
        |I|/r^2 & \mbox{if $r\geq |I|$, $|x|\leq 5r/2$};\\
    0 & \mbox{otherwise,}
    \end{array} \right. \]
which follows from the smoothness of $P$ and the mean value zero property of $a$.
As the nature of $\Gamma_\alpha$ is fundamentally different in the cases $\alpha<0$, $0<\alpha\leq 1$ and $\alpha>1$, we divide the analysis into three cases. For $\alpha<0$ and $\alpha>0$ the interesting situation is when $|I|\gtrsim 1$ and $|I|\lesssim 1$ respectively.
\subsubsection*{Case 1: $\alpha<0$}
Elementary considerations reveal that if $|I|\lesssim 1$ then
\[ \widetilde{\mathcal{M}}_{\alpha,\alpha/2}a(x) \lesssim \left\{ \begin{array}{ll}
         |I| & \mbox{if $|x|\lesssim 1$};\\
    |I|/|x|^{-(2-\alpha)/(1-\alpha)} & \mbox{otherwise,}
    \end{array} \right. \]
and if $|I|\gtrsim 1$ then
\[\widetilde{\mathcal{M}}_{\alpha,\alpha/2}a(x) \lesssim \left\{ \begin{array}{ll}
         |I|^{-1}|x|^{\alpha/(1-\alpha)} & \mbox{if $|x|\lesssim|I|^{1-\alpha}$};\\
        |I||x|^{-(2-\alpha)/(1-\alpha)} & \mbox{otherwise}.
    \end{array} \right. \]
In both cases \eqref{atombound} follows by direct calculation.
\subsubsection*{Case 2: $0<\alpha\leq 1$}
For technical reasons it is convenient to deal first with the particularly simple case $\alpha=1$. If
$|I|\gtrsim 1$ then arguing similarly we obtain
\[ \widetilde{\mathcal{M}}_{1,1/2}a(x) \lesssim \left\{ \begin{array}{ll}
         |I|^{-1} & \mbox{if $|x|\lesssim |I|$};\\
    0 & \mbox{otherwise,}
    \end{array} \right. \]
and if $|I|\lesssim 1$ then
\[\widetilde{\mathcal{M}}_{1,1/2}a(x) \lesssim \left\{ \begin{array}{ll}
         1 & \mbox{if $|x|\lesssim 1$};\\
    0 & \mbox{otherwise.}
    \end{array} \right. \]
Clearly in both cases \eqref{atombound} follows immediately.

Suppose now that $0<\alpha<1$. If $|I|\gtrsim 1$ then
\[ \widetilde{\mathcal{M}}_{\alpha,\alpha/2}a(x) \lesssim \left\{ \begin{array}{ll}
         |I|^{-1} & \mbox{if $|x|\lesssim |I|$};\\
    0 & \mbox{otherwise,}
    \end{array} \right. \]
and if $|I|\lesssim 1$ then
\[\widetilde{\mathcal{M}}_{\alpha,\alpha/2}a(x) \lesssim \left\{ \begin{array}{lll}
         |I|^{-(1-\alpha)} & \mbox{if $|x|\lesssim|I|^{1-\alpha}$};\\
        |I||x|^{-(2-\alpha)/(1-\alpha)} & \mbox{if $|I|^{1-\alpha}\lesssim |x|\lesssim 1$};\\
    0 & \mbox{otherwise.}
    \end{array} \right. \]
Again, in both cases \eqref{atombound} follows directly.

\subsubsection*{Case 3: $\alpha>1$} If $|I|\gtrsim 1$ then
\[ \widetilde{\mathcal{M}}_{\alpha,\alpha/2}a(x) \lesssim \left\{ \begin{array}{ll}
         |I|^{-1} & \mbox{if $|x|\lesssim |I|$};\\
    |I|^{-1}|x|^{\alpha/(1-\alpha)} & \mbox{otherwise,}
    \end{array} \right. \]
and if $|I|\lesssim 1$ then
\[\widetilde{\mathcal{M}}_{\alpha,\alpha/2}a(x) \lesssim \left\{ \begin{array}{ll}
         |I|^{-(1-\alpha)} & \mbox{if $|x|\lesssim|I|^{1-\alpha}$};\\
        |I|^{-1}|x|^{\alpha/(1-\alpha)} & \mbox{if $|I|^{1-\alpha}\lesssim |x|$}.\\
    \end{array} \right. \]
Once again \eqref{atombound} follows.

\bibliographystyle{amsplain}

\end{document}